\newtheorem{theorem}{Theorem} 
\newtheorem{lemma}{Lemma} 
\newtheorem{definition}{Definition}
\newtheorem{corollary}{Corollary}
\newtheorem{remark}{Remark}
\begin{document}

\title{Uniform estimates for the Penalized Boundary Obstacle Problem}
\author{Rohit Jain} \address{Department of Mathematics\\ University of Texas at Austin\\ 1 University Station C1200\\ Austin, TX 78712}

\begin{abstract}
In this paper, motivated by a problem arising in random homogenization theory, we initiate the study of uniform estimates for the fractional penalized obstacle problem, $ \Delta^{s}u^{\epsilon} = \beta_{\epsilon} (u^{\epsilon})$. In particular we consider the penalized boundary obstacle problem, $s = \frac{1}{2}$, and obtain sharp estimates for the solution independent of the penalizing parameter $\epsilon$. This is a generalization of a result due to H. Brezis and D. Kinderlehrer.
\end{abstract}

\maketitle

\section{Introduction}
We consider a homogenization problem modeling diffusion through a semi-permeable membrane. In this model the transport of the molecules through the membrane is possible only across some given channels and in a fixed direction. More precisely:
\\
\\
Given a smooth function $\phi: \mathbb{R}^{n} \to \mathbb{R}^{n}$ and a subset $T_{\delta}$ of $\mathbb{R}^{n}$, consider the solution $u^{\delta}$ to the following obstacle-type problem
\\
\[
   \begin{cases}
        u^{\delta}(x) \geq \phi(x) & \forall x \in T_{\delta}.\\
        (-\Delta)^{s} u^{\delta} \geq 0 & \forall x \in \mathbb{R}^{n}.\\
        (-\Delta)^{s}  u^{\delta} = 0 & \forall x \in \mathbb{R}^{n} \setminus T_{\delta}.\\
        (-\Delta)^{s}  u^{\delta} = 0 &\forall x \in T_{\delta} \; \; \text{and} \; u^{\delta} > \phi(x).        
   \end{cases}
\]
\\
Here $(-\Delta)^{s}$ denotes the fractional Laplace operator of order $s \in (0,1)$. We think of the domain $\mathbb{R}^{n}$ as being perforated with holes and the obstacle, $\phi$, supported on the set  $T_{\delta}$. Here $T_{\delta}$ is a union of small sets $S_{\delta}(k)$ that are periodically distributed. One can also study the problem where $S_{\delta}(k)$ remains periodically distributed but is allowed to take random shapes and sizes. In this case we introduce a probability space $(\Omega, \mathcal{F}, \mathcal{P})$, and assume $\forall \omega \in \Omega$, $ \forall \delta > 0$, there exists some subset $S_{\delta} (k, \omega) \subset B_{\delta}(\delta k)$ where $B_{\delta}(\delta k)$ denotes the ball of radius $\delta$ centered at $\delta k$. We then define $T_{\delta} = \bigcup_{k \in \mathbb{Z}^{n}} S_{\delta} (k, \omega)$. 
\\
\\
Restricting the problem to an open subset $D \subset \mathbb{R}^{n+1}_{+}$, and assuming the capacity of $S_{\delta} (k, \omega) = \delta^{n} \gamma(k, \omega) \leq \delta^{n} \bar{\gamma}$ where $\gamma(k, \omega)$ is a stationary ergodic process and $\bar{\gamma} > 0$, it follows that the solution to the above system converges $W^{1,2}(D,\|y\|^{a} dxdy)$-weak and almost surely with resepct to $\omega \in \Omega$ to the minimizer of the the penalized energy functional,
\\
\begin{equation}
E_{\epsilon}(u) = \frac{1}{2} \int_{D} |y|^{a} |\nabla u|^{2} \; dxdy + \frac{1}{2\epsilon} \int_{\Sigma} (u-\phi)_{-}^{2} \; dx.
\end{equation}
\\
Here $\Sigma = D \cap \{y = 0\}$, $0 < \frac{1}{\epsilon} < C(\bar{\gamma})$, and $a = 1-2s$. We refer to (\cite{CM08}) for the relevant details. It is the purpose our work to study (1).  In order to do so, we turn our attention slightly to consider the following boundary obstacle problem. Minimize
\\
\begin{equation}
E(u) = \frac{1}{2} \int_{D} |y|^{a} |\nabla u|^{2} \; dxdy
\end{equation}
\\
among all functions restricted to lie above $\phi(x)$ on the hypersurface $\Sigma$. The functional $E_{\epsilon}(u)$ can be thought of as a family of funcitonals paramaterized by $\epsilon$. The penalizing term accounts for the obstacle constraint in the boundary obstacle problem. The idea is that the family of functionals contstructed in this way behave like $E(u)$ when $u \geq \phi$ and penalizes the function when $u < \phi$. The strength of the penalization increases as $\epsilon$ decreases. 
\\
\\ 
We let $u^{\epsilon}$ deonte the solution to the penalized boundary obstacle problem. In particular assuming $\phi = 0$, $D = B_{1}(0)$, $a = 0$, $B_{r}^{'} = \mathbb{R}^{n-1} \cap B_{r} $, and given a function $\varphi \in C^{2, \alpha} \left(\overline{B_{1}} \right)$ strictly positive on $\partial B_{1}^{+} \cap \{y = 0\}$  we consider the following penalized problem
\\
\begin{equation}
   \begin{cases}
        \Delta u^{\epsilon} = 0 &  \textnormal{in} \; \; B_{1}^{+}.\\
        u^{\epsilon}_{y} = \beta_{\epsilon}(u^{\epsilon}) & \textnormal{on} \; \; B_{r}'.\\
        u^{\epsilon}= \varphi(x) & \textnormal{on} \; \; (\partial B_{1})^{+}.\\
  \end{cases}
\end{equation}
\\
Here we let $(\partial B_{1})^{+}$ denote the set $\partial B_{1}^{+} \setminus \{y = 0\}$. Motivated by the random homogenization problem, we consider the following family of penalization functions
\begin{definition} For $\epsilon > 0$, a family of functions $\beta_{\epsilon}(t)$ is an admissible penalization if it satisfies the following:
\\
1. $\forall \epsilon > 0$, $\beta_{\epsilon}(t)$ is uniformly Lipschitz for $-\infty < t < \infty$.
\\
2. $\forall \epsilon > 0$, $\beta_{\epsilon}(t) \leq 0$.
\\
3. $\forall \epsilon > 0$ and $\forall t \geq 0$, $\beta_{\epsilon}(t) = 0$. 
\\
4. $\beta_{\epsilon}'(t) \geq 0$.
\\
5. $\beta_{\epsilon}''(t) \leq 0$.
\end{definition}

\begin{remark} We point out a scaling property of the class of penalizing functions. If $\beta_{1}(t)$ satisfies the conditions of the definition, then $\forall \epsilon > 0$, $\beta_{\epsilon}(t) = \beta_{1}(t / \epsilon)$ is an admissible family of penalizations. In general if $\beta_{\epsilon}(t)$ is an element of an admissable family of penalizations, then the function $\beta(t) = \beta_{\epsilon}(\sigma t)$ is an element of the same admissable family corresponding to the parameter $\frac{\epsilon}{\sigma}$. 
\end{remark}
Without loss of generality we consider
\\
\begin{equation}
 \beta_{\epsilon}(t) = \left\{
     \begin{array}{lr}
       \frac{t}{\epsilon} &  \; \; t < 0.\\
       0 & t \geq 0.
     \end{array}
   \right.
\end{equation}
\\
In this work we are interested in investigating uniform estimates in $\epsilon$ for the solution to the penalized boundary obstacle problem. Using the penalization stated above it is proved in forthcoming work (\cite{BJD15}) that the solutions $u^{\epsilon}$ exist and are $C^{1,\alpha}(\overline{B_{1/2}^{+}}) \; \; 0 < \alpha < 1$, for a constant $C$ dependent on $\epsilon$. Our interest in this paper is obtaining sharp uniform estimates in $\epsilon$. In particular we are interested in obtaining a result analogous for the classical obstacle problem (\cite{BK74}).  As in the theory for the boundary obstacle problem, by standard regularity theory it is enough to prove uniform estimates at the level of $u^{\epsilon}_{y}$.  A consequnce of our estimates is that we can prove uniform convergence of the penalized solution to the solution of the boundary obstacle problem. This follows from the sharp uniform estimates and the following observation. For a uniform constant $C$,  
$$|\frac{1}{\epsilon}(u^{\epsilon})_{-}| = |u^{\epsilon}_{y}| \leq C.$$ 
This implies in particular that, 
$$ |(u^{\epsilon})_{-}| \leq \epsilon C.$$ 
Letting $\epsilon \to 0$ we conclude that, 
$$(u^{0})_{-} \equiv 0.$$ 
Furthermore the uniform estimate from below on $u^{\epsilon}_{y}$ allows us to conclude that $u^{0}_{y}$ does not deteriorate on $\{u^{0} = 0\}$. Hence we recover the solution to the boundary obstacle problem with zero obstacle. Since the sharp estimate for the limiting solution is known (\cite{AC06}), we aim to show that $u^{\epsilon}_{y}$ is uniformly $C^{1/2}$.
\\
\\
In the rest of the paper we proceed in stages to prove the uniform estimates. We make the assumption that $u^{\epsilon}(0) = 0$, so in particular $u^{\epsilon}_{y}(0) = 0$. The idea is to first prove the semi-convexity of the solution in the tangential directions. An iteration argument will allow us to conclude a H\"{o}lder growth estimate for $u^{\epsilon}_{y}$ from the interface $\partial \{u^{\epsilon} > 0\}$. To obtain the sharp estimate, we first turn our attention to global solutions of the penalized problem. Global solutions are convex hence we are able to employ a monotonicty formula first proved in (\cite{AC06}) to improve the growth estimate from the interface obtained in the preceding section. A scaling argument in the penalization parameter concludes the proof of the desired universal H\"{o}lder estimate.  For the local problem we utilize a technical estimate to correct for semi-convexity and then an iterative application of the monotonicity formula improves the growth estimate of $u^{\epsilon}_{y}$ from the interface. To conclude the universal H\"{o}lder norm estimate we apply again the scaling arguments in the penalization parameter as considered for the global solutions. 
\\
\\
\emph{Acknowledgements} I would like to express my sincerest gratitude and deepest apprecation to my thesis advisors Professor Luis A. Caffarelli and Professor Alessio Figalli. It has been a truly rewarding experience learning from them and having their guidance. I would also like to thank Professor Donatella Danielii for useful discussions and Thomas Backing for his careful reading of a first draft and for pointing out some corrections.

\section{Preliminary Estimates}
We start by noting that we can perfom an even reflection in the $y$ variable and consider the problem posed on the entire domain $B_{1}$, where $u^{\epsilon}$ is harmonic in the upper and lower half spaces and $u^{\epsilon} =  \varphi$ on $\partial B_{1}$. When proving estimates it will suffice to consider only one of the half spaces. For covenience we study estimates in $B_{1}^{+}$. 

\begin{lemma} Let $u^{\epsilon}$ be the solution to the penalized boundary obstacle problem. Then,
\\
\begin{equation}
\|u^{\epsilon}\|_{L^{\infty}(B_{1})} \leq \|\varphi\|_{L^{\infty}(\partial B_{1})}.
\end{equation}
\\
\end{lemma}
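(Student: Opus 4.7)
The estimate is a weak maximum principle for a harmonic function equipped with the Robin-type boundary condition $u^{\epsilon}_y = \beta_{\epsilon}(u^{\epsilon})$ on $B_1'$, and the proof I propose rests on the two structural features of the penalization: property 2 ($\beta_{\epsilon}\le0$) and property 3 ($\beta_{\epsilon}(t)=0$ for $t\ge 0$). After the even reflection mentioned at the start of Section~2, $u^{\epsilon}$ is even in $y$, so it suffices to bound $u^{\epsilon}$ on $\overline{B_1^+}$. Write $M:=\|\varphi\|_{L^{\infty}(\partial B_1)}$ and argue separately for the supremum and the infimum.

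For the upper bound, let $x_0\in\overline{B_1^+}$ be a point where $u^{\epsilon}$ attains its maximum. Harmonicity in $B_1^+$ plus the strong maximum principle rule out an interior maximum unless $u^{\epsilon}$ is constant, in which case the bound follows from the curved-boundary values. On $(\partial B_1)^+$ we have $u^{\epsilon}=\varphi\le M$. The only delicate case is $x_0\in B_1'$. If $u^{\epsilon}(x_0)<0$, the supremum is negative and we are done. If $u^{\epsilon}(x_0)\ge 0$, property 3 gives $u^{\epsilon}_y(x_0)=\beta_{\epsilon}(u^{\epsilon}(x_0))=0$; but $B_1'$ is a flat (hence smooth) piece of $\partial B_1^+$ that satisfies the interior ball condition, so the Hopf lemma forces the inward normal derivative $u^{\epsilon}_y(x_0)$ to be strictly negative unless $u^{\epsilon}$ is constant. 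The equality $u^{\epsilon}_y(x_0)=0$ therefore forces $u^{\epsilon}\equiv\varphi(x_0)\le M$.

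For the lower bound, let $x_0$ be a minimum. Interior points and points of $(\partial B_1)^+$ are handled as before. If $x_0\in B_1'$ with $u^{\epsilon}(x_0)\ge 0$, then the minimum is nonnegative, hence $\ge -M$. If $u^{\epsilon}(x_0)<0$, then $u^{\epsilon}_y(x_0)=\beta_{\epsilon}(u^{\epsilon}(x_0))\le 0$ by property 2, while the Hopf lemma at a boundary minimum on the smooth piece $B_1'$ would require $u^{\epsilon}_y(x_0)>0$ (strict) unless $u^{\epsilon}$ is constant. The contradiction again forces $u^{\epsilon}$ to be constant, and the constant must equal $\varphi$ on $(\partial B_1)^+$, which is at least $-M$.

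There is no real obstacle: the only point worth checking carefully is that Hopf applies on $B_1'$, which it does because this portion of $\partial B_1^+$ is smooth and every point of it admits an interior ball in $B_1^+$. The argument uses only the sign properties of $\beta_{\epsilon}$ (items 2 and 3 of Definition 1) and not its Lipschitz, monotonicity, or concavity properties, so the bound is independent of $\epsilon$ as required.
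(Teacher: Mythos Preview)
Your proof is correct and follows essentially the same route as the paper: the harmonic maximum principle in $B_1^+$ combined with the Hopf lemma on the flat piece $B_1'$, using the sign properties 2 and 3 of $\beta_{\epsilon}$ to rule out boundary extrema there. The only minor organizational difference is that for the upper bound the paper derives its contradiction by invoking the assumed positivity of $\varphi$ on $\partial B_1\cap\{y=0\}$, whereas your case split on the sign of $u^{\epsilon}(x_0)$ avoids that hypothesis.
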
 

\begin{proof}
Since $u^{\epsilon}_{y}(x,0) = \beta_{\epsilon}(u^{\epsilon}) \leq 0$, by the maximum principle it follows that,
$$\inf_{x \in B_{1}^{+}} u^{\epsilon} \geq \inf_{x \in \partial B_{1}^{+}} \varphi. $$
Suppose now by contradiction that,
$$\sup_{x \in B_{1}^{+}} u^{\epsilon} > \sup_{x \in \partial B_{1}^{+}} \varphi. $$
Then by the Hopf Lemma, this must be obtained at some point $(x_{0},0)$ where $u^{\epsilon}_{y}(x_{0},0) < 0$. But these are exactly the set of points where $u^{\epsilon}(x,0) \leq 0$. Since we are assuming that $\varphi (x,0) > 0$ we have our desired contradiction. By reflection we obtain a similar estimate in $B_{1}^{-}$.
\end{proof}

The next result shows that the normal derivative is uniformly bounded. 

\begin{lemma} Let $u^{\epsilon}$ be the solution to the penalized boundary obstacle problem. Then,
\\
\begin{equation}
\|u_{y}^{\epsilon}\|_{L^{\infty}(B_{1})} \leq C.
\end{equation}
\end{lemma}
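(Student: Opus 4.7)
The strategy is a direct barrier argument. The upper bound $u^\epsilon_y \leq 0$ on $\{y=0\}\cap B_1$ is automatic from $u^\epsilon_y = \beta_\epsilon(u^\epsilon) \leq 0$, and since $u^\epsilon_y = u^\epsilon/\epsilon$ wherever $u^\epsilon<0$, a uniform lower bound on $u^\epsilon_y$ on the flat part is equivalent to the trace bound $(u^\epsilon)_- \leq C\epsilon$ on $\{y=0\}\cap B_1$ for a constant $C$ independent of $\epsilon$. I reduce at once to the nontrivial case in which $m := \min_{\{y=0\}\cap\overline{B_1}} u^\epsilon$ is strictly negative (otherwise $u^\epsilon_y \equiv 0$ on the flat part). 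Using the hypothesis that $\varphi$ is strictly positive on the compact equator $\partial B_1^+ \cap \{y=0\}$, continuity produces constants $\delta,c_0 > 0$ depending only on $\varphi$ with $\varphi \geq c_0$ on the neighborhood $N := \{(x,y) \in (\partial B_1)^+ : y \leq \delta\}$. Since $u^\epsilon = \varphi \geq c_0 > 0$ along the equator, the minimum $m<0$ must be attained at an interior point $(x^\ast,0)$ of $\{y=0\}\cap B_1$.

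The central step is comparison with the affine barrier
\[
V(x,y) := m - Ly, \qquad L := 2\|\varphi\|_{L^\infty}/\delta.
\]
$V$ is harmonic, and I claim $V \leq u^\epsilon$ on all of $\partial B_1^+$. On the flat boundary, $V \equiv m \leq u^\epsilon$ by the definition of $m$. On $N$, $V \leq m \leq 0 < c_0 \leq \varphi$, so the slope is irrelevant there. On $(\partial B_1)^+\setminus N$ (where $y > \delta$), the calibration $L\delta = 2\|\varphi\|_{L^\infty}$ gives $V \leq m - L\delta \leq -\|\varphi\|_{L^\infty} \leq \varphi$. The harmonic maximum principle then yields $V \leq u^\epsilon$ throughout $B_1^+$.

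Since $V$ and $u^\epsilon$ coincide at $(x^\ast,0)$, the nonnegative harmonic function $u^\epsilon - V$ attains its minimum $0$ at this flat boundary point, so a direct $y$-difference quotient argument (or Hopf) gives $u^\epsilon_y(x^\ast,0) \geq V_y(x^\ast,0) = -L$. Combining with the penalization relation $u^\epsilon_y(x^\ast,0) = \beta_\epsilon(m) = m/\epsilon$ yields $m \geq -L\epsilon$, hence $(u^\epsilon)_- \leq L\epsilon$ and $|u^\epsilon_y| \leq L$ on $\{y=0\}\cap B_1$. The bound extends to all of $B_1^+$ by the maximum principle applied to the harmonic function $u^\epsilon_y$: its values on the remaining boundary $(\partial B_1)^+$ are uniformly controlled by standard boundary Schauder estimates coming from $\varphi \in C^{2,\alpha}$, and even reflection across $\{y=0\}$ transfers the bound to all of $B_1$.

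The main subtlety is the calibration of the slope $L$. The strict positivity of $\varphi$ near the equator is precisely what allows the barrier $V$ to be dominated by the Dirichlet data uniformly in $\epsilon$, so the slope can be taken $\epsilon$-independent. Without this structural hypothesis on $\varphi$ the naive linear barrier would need slope of order $\|\varphi\|_{L^\infty}/\epsilon$ in order to lie below $\varphi$ on the curved boundary, which would produce no uniform estimate.
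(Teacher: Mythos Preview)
Your argument is correct and follows the same strategy as the paper's: build a harmonic barrier lying below $u^\epsilon$ that touches at the flat-boundary minimum $(x^\ast,0)$, compare $y$-derivatives there, use the boundary relation $u^\epsilon_y(x^\ast,0)=m/\epsilon$ to transfer the bound to all of $B_1'$, and then propagate inside via the maximum principle for the harmonic function $u^\epsilon_y$. The only difference is the barrier itself. The paper solves an auxiliary Dirichlet problem (a harmonic $h$ with constant data $\min u^\epsilon$ on $B_1'$ and a fixed value $-M<\inf\varphi$ on $\partial B_1$) and appeals to harmonic gradient estimates to bound $h_y(x^\ast,0)$; you instead write down the explicit affine barrier $V=m-Ly$ and calibrate the slope $L$ via the strict positivity of $\varphi$ near the equator so that $V\leq\varphi$ on the whole curved boundary. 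Your version is slightly more elementary, avoids any issue of controlling the barrier's normal derivative uniformly as $x^\ast$ approaches the equator, and makes transparent exactly where the hypothesis $\varphi>0$ on $\partial B_1^+\cap\{y=0\}$ enters; the paper defers the use of that hypothesis to the subsequent lemma.
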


\begin{proof}
We consider the following auxillary problem:
\\
\[
   \begin{cases}
        \Delta h = 0 & \textnormal{in} \; \; B_{1} \setminus \{y = 0 \}.\\
        h = \min u^{\epsilon} &  \textnormal{in} \; \; B_{1}' .\\
        h = -M &  \textnormal{on} \; \; \partial B_{1}. \\
  \end{cases}
\]
\\
Here we let $-M < \inf_{x \in \partial B_{1}} \varphi$.  Since $\Delta h = 0$ and $ h = \inf u^{\epsilon}$ on $B_{1}'$, we know by the comparison principle that $u^{\epsilon} \geq h$ everywhere. Furthermore at the minimum point $(x_{0},0)$ of $u^{\epsilon}$ on $B_{1}'$, we know that,
$$u^{\epsilon}_{y}(x_{0},0) \geq h_{y}(x_{0},0).$$
From harmonic estimates it follows that for a universal constant $C$,  
$$h_{y}(x_{0},0) \geq -C.$$
Moreover using the boundary condition $u^{\epsilon}_{y} = \frac{1}{\epsilon} u^{\epsilon}$, we see that,
$$u^{\epsilon}_{y}(x_{0},0) = \min_{B_{1}'} u^{\epsilon}_{y}(x,0).$$
On the other hand $u^{\epsilon}_{y} = \beta_{\epsilon}(u^{\epsilon}) \leq 0$. Hence this proves that,  
$$-C \leq u^{\epsilon}_{y}(x,0) \leq 0 \; \; \forall x \in B_{1}'.$$ 
Finally, noting that $\Delta u^{\epsilon}_{y} = 0$ in the interior of the domain an application of the maximium princple propogates the estimate inside. That is,
$$ \|u^{\epsilon}_{y}\|_{L^{\infty}(B_{1})} \leq C. $$
\end{proof}

Before proving the tangential semi-convexity of the solution we state and prove a result that restricts our penalization to the interior of the domain. More specifically, by the positivity of $\varphi $ on $\partial B_{1} \cap \{ y = 0 \},$ we know that there exists a neighborhood $\mathbf{N}(\partial B_{1})$ of $\partial B_{1} \cap \{y = 0 \} \subset \partial B_{1}$ where $\varphi > 0$. Our next lemma helps us propogate this information into the interior.

\begin{lemma} $\exists \delta_{0} > 0$, such that $\forall x \in  B_{1}' \setminus B_{1 - \delta_{0}}'$, $u^{\epsilon} (x,0) > 0$. In particular, in the annlular region $B_{1}' \setminus B_{1 - \delta_{0}}',$ $\beta_{\epsilon}(u^{\epsilon}) = 0$. 
\end{lemma}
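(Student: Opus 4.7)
The plan is to compare $u^\epsilon$ from below with a harmonic function whose boundary data exhibit the boundary positivity of $\varphi$, and to use continuity to propagate that positivity into a small collar around the equator $\partial B_1 \cap \{y=0\}$. The natural candidate is $h$, the harmonic extension to $B_1$ of $u^\epsilon|_{\partial B_1}$ (which agrees with $\varphi$ on $(\partial B_1)^+$ and, after the even reflection set up at the start of the section, with its even image on $(\partial B_1)^-$). The first substantive step is therefore to prove $u^\epsilon \geq h$ in $B_1$.

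The key observation enabling this comparison is that the evenly reflected $u^\epsilon$ is superharmonic on $B_1$ in the distributional sense. Indeed, $u^\epsilon$ is classically harmonic on $B_1 \setminus \{y=0\}$, and a brief integration by parts against a test function, combined with the even symmetry, shows that the distributional Laplacian picks up the jump of $u^\epsilon_y$ across $\{y=0\}$, namely $u^\epsilon_y(x,0^+) - u^\epsilon_y(x,0^-) = 2\beta_\epsilon(u^\epsilon(x,0)) \leq 0$. Thus $\Delta u^\epsilon \leq 0$ in $\mathcal{D}'(B_1)$, and the standard minimum principle for superharmonic functions applied to $u^\epsilon - h$ (which vanishes on $\partial B_1$) yields $u^\epsilon \geq h$ throughout $B_1$.

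With the comparison in place, I would conclude via continuity and compactness. Since $h \in C(\overline{B_1})$ and $\varphi > 0$ on the compact set $\partial B_1 \cap \{y=0\}$, there exists $c_0 > 0$ with $h \geq c_0$ on that set, and hence $h \geq c_0/2$ on some relatively open neighborhood $U \subset \overline{B_1}$ of the equator. Choosing $\delta_0 > 0$ small enough that $B_1' \setminus B_{1-\delta_0}' \subset U$, we get $u^\epsilon(x,0) \geq h(x,0) \geq c_0/2 > 0$ on this annular strip, so by property~3 of admissible penalizations $\beta_\epsilon(u^\epsilon) = 0$ there.

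The only point that requires any care is the superharmonicity claim, since one must verify that the normal-derivative condition $u^\epsilon_y = \beta_\epsilon(u^\epsilon) \leq 0$ translates, after even reflection, into the correct sign of the distributional Laplacian; this is the routine jump-across-an-interface computation sketched above. Everything else is a soft maximum-principle plus continuity argument, so I do not expect a serious obstacle.
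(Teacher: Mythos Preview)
Your proof is correct and follows a genuinely different route from the paper. The paper does not compare $u^\epsilon$ with the harmonic extension of its boundary data; instead it invokes the already--established uniform bound $\|u^\epsilon_y\|_{L^\infty(B_1)} \leq C$ from Lemma~2 and integrates vertically: for a boundary point $(x,y) \in \partial B_1$ near the equator one has $u^\epsilon(x,y) = \varphi(x,y) > 0$, and hence $u^\epsilon(x,0) \geq \varphi(x,y) - Cy$, which is positive once $y$ is small. The width $\delta_0$ is then read off from the sphere's geometry.

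Your comparison--with--$h$ argument has the advantage of being self--contained: it does not rely on Lemma~2 at all, and it yields the slightly stronger global inequality $u^\epsilon \geq h$ with a barrier $h$ that is manifestly independent of $\epsilon$ (since it is determined by $\varphi$ alone), so the uniformity of $\delta_0$ in $\epsilon$ is immediate. The paper's approach, by contrast, stays at the level of classical one--dimensional integration and avoids the distributional jump computation across $\{y=0\}$, at the cost of depending on the prior lemma. Both are valid and both deliver a $\delta_0$ uniform in $\epsilon$, which is the essential point for the subsequent semi--convexity argument.
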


\begin{proof}
Let $(x, y) \in \mathbf{N}(\partial B_{1})$. Then by the uniform boundedness of $u^{\epsilon}_{y}$ we see that,
$$ \varphi(x,y) - u^{\epsilon}(x,0) = \int_{0}^{y} u^{\epsilon}_{y}(\bar{x},s) \; ds \leq Cy.$$
Hence for $y \leq y_{0}$ where $y_{0}$ small enough,
$$0 < \varphi(x,y) - Cy \leq u^{\epsilon}(x,0).$$
To conclude, we define
\begin{equation}
\delta_{0} = distance \left \{ \; \{(x,0) \; | \; (x,y) \in \partial B_{1} \; \text{where} \; y > y_{0} \; \}, \partial B_{1} \cap \{ y = 0 \} \; \right \} > 0.
\end{equation}
\end{proof}

Using the previous lemma we now prove that solutions are semi-convex in the tangential directions.  

\begin{lemma} Let $u^{\epsilon}$ be the solution to the penalized boundary obstacle problem. Then for any direction $\tau$ parallel to $\mathbb{R}^{n-1}$,
\\
\begin{equation}
\inf_{B_{1-\delta_{0}}} u^{\epsilon}_{\tau \tau} \geq -C_{0}.
\end{equation}
\\
Here $\delta_{0}$ is from the previous lemma, and $C_{0}$ is a constant independent of $\epsilon.$
\end{lemma}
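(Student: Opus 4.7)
The plan is to bound the second-order tangential difference quotient of $u^\epsilon$ from below via the maximum principle, exploiting the concavity of $\beta_\epsilon$ (condition 5 of Definition 1) together with Hopf's lemma on the flat face. Fix a tangential direction $\tau \in \mathbb{R}^{n-1}$ and, for small $h>0$, set
\[
W^h(x,y)=\frac{u^\epsilon(x+h\tau,y)+u^\epsilon(x-h\tau,y)-2u^\epsilon(x,y)}{h^2}.
\]
Being a linear combination of translates of $u^\epsilon$, $W^h$ is harmonic in $B_1^+\setminus\{y=0\}$. On the flat face the boundary condition $u^\epsilon_y=\beta_\epsilon(u^\epsilon)$ yields
\[
W^h_y(x,0)=\frac{\beta_\epsilon(u^\epsilon(x+h\tau,0))+\beta_\epsilon(u^\epsilon(x-h\tau,0))-2\beta_\epsilon(u^\epsilon(x,0))}{h^2},
\]
and concavity of $\beta_\epsilon$, in the form of the supporting-line inequality, gives the crucial pointwise bound
\[
W^h_y(x,0)\leq \beta_\epsilon'(u^\epsilon(x,0))\,W^h(x,0) \qquad \text{on } B_1'.
\]

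The next step is to control $W^h$ on the curved boundary. By Lemma 3, $\beta_\epsilon(u^\epsilon)\equiv 0$ on the flat annular strip $B_1'\setminus B_{1-\delta_0}'$, so even reflection across $\{y=0\}$ makes $u^\epsilon$ harmonic on the full annular region $B_1\setminus B_{1-\delta_0}$ with smooth Dirichlet data $\varphi\in C^{2,\alpha}$ on $\partial B_1$. Boundary Schauder estimates then yield $u^\epsilon\in C^{2,\alpha}$ with bounds independent of $\epsilon$ on a slightly smaller collar surrounding $\partial B_{1-\delta_0}$ (after shrinking $\delta_0$ if necessary in the application of Lemma 3). In particular $|u^\epsilon_{\tau\tau}|\leq C_0$ there, hence $|W^h|\leq C_0$ on $\partial B_{1-\delta_0}^+\cap\{y>0\}$ for all $h$ sufficiently small.

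I then argue $W^h\geq -C_0$ throughout $B_{1-\delta_0}^+$. Assume otherwise; then the minimum of $W^h$ over $\overline{B_{1-\delta_0}^+}$ is strictly less than $-C_0$. Harmonicity rules out an interior minimum, and the previous step rules out a minimum on the curved part of the boundary, so the minimum must be attained at some $(x_0,0)\in B_{1-\delta_0}'$. Hopf's lemma applied in $B_{1-\delta_0}^+$ then forces $W^h_y(x_0,0)>0$, while the boundary inequality above, together with $\beta_\epsilon'\geq 0$ and $W^h(x_0,0)<0$, gives $W^h_y(x_0,0)\leq 0$, a contradiction. Sending $h\to 0$ yields $u^\epsilon_{\tau\tau}\geq -C_0$ in $B_{1-\delta_0}^+$, and even reflection in $y$ extends the bound to $B_{1-\delta_0}$.

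The main obstacle is the low regularity of $\beta_\epsilon$: a naive twofold differentiation of the boundary condition in $\tau$ would invoke the distributional $\beta_\epsilon''$, which is singular for the model penalization (4). Working at the level of the tangential second difference quotient sidesteps this entirely, as only the supporting-line form of concavity is needed. A secondary care point is matching the collar where $u^\epsilon$ is known to be smooth (from Lemma 3) with the region where the maximum principle is applied, along with keeping the difference quotient well-defined near $\partial B_{1-\delta_0}$ for small $h$; a slight shrinking of $\delta_0$ handles both issues at once.
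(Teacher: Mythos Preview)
Your proof is correct and follows essentially the same route as the paper: both work with the tangential second difference quotient, rule out a negative interior minimum on the flat face via Hopf's lemma combined with the boundary condition, and push the minimum to $\partial B_{1-\delta_0}$ where Lemma 3 plus harmonic (Schauder) estimates give the uniform bound. The only cosmetic difference is that the paper splits into the cases $x_0\in\{u^\epsilon\le 0\}$ and $x_0\in\{u^\epsilon>0\}$ using the explicit piecewise-linear form of $\beta_\epsilon$, whereas your single inequality $W^h_y\le \beta_\epsilon'(u^\epsilon)W^h$ from concavity handles both at once and applies to any admissible penalization in Definition~1.
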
 

\begin{proof}
We consider the tangential second incremental quotients for our solution $u^{\epsilon}$ and $u^{\epsilon}_{y}$ at a point $x$. Specifically, $\forall \delta > 0$,
$$u_{\tau \tau, \delta}^{\epsilon}(x) = \frac{u^{\epsilon}(x + \delta \tau) +  u^{\epsilon}(x - \delta \tau) - 2 u^{\epsilon}(x)}{\delta^{2}}.$$  
$$(u_{y}^{\epsilon})_{\tau \tau, \delta}(x)  = \frac{u^{\epsilon}_{y}(x + \delta \tau) +  u^{\epsilon}_{y}(x - \delta \tau) - 2 u^{\epsilon}_{y}(x)}{\delta^{2}}.$$  
We point out that for every $x \in \{u^{\epsilon}_{y} = \frac{1}{\epsilon} u^{\epsilon} \}$ we have the inequality,
\\
\begin{equation}
\frac{1}{\epsilon} u_{\tau \tau, \delta}^{\epsilon}(x) \geq (u_{y}^{\epsilon})_{\tau \tau, \delta}(x).
\end{equation}
\\
This follows from the observation that for every $x \in \{u^{\epsilon}_{y} = \frac{1}{\epsilon} u^{\epsilon} \}$, $x \pm \delta \tau$ could lie outside of the set $\{u^{\epsilon}_{y} = \frac{1}{\epsilon} u^{\epsilon} \}$. Outside of this set $u^{\epsilon} > 0$ and $u^{\epsilon}_{y} = 0$. In particular we always have the inequality, $\frac{1}{\epsilon} u^{\epsilon}(x \pm \delta \tau) \geq u^{\epsilon}_{y}(x \pm \delta \tau)$. The following claim characterizes where $ u_{\tau \tau, \delta}^{\epsilon}(x)$ achieves its minimum point.
\\
\\
\textbf{Claim}: Let $(x_{0},0) \in B_{1}'$ be such that
$$ u_{\tau \tau, \delta}^{\epsilon}(x_{0}) =  \min_{B_{1}'} \ u_{\tau \tau, \delta}^{\epsilon}(x).$$
Then, $x_{0} \in \{u^{\epsilon} > 0 \}$.

\begin{proof}
Suppose that the minimum point $(x_{0},0)$ for $ u_{\tau \tau, \delta}^{\epsilon}(x)$ is not realized on the set $\{u^{\epsilon} > 0 \}$. Then for some $x_{0} \in \{u^{\epsilon}_{y} = \frac{1}{\epsilon} u^{\epsilon} \}$,  
$$ u_{\tau \tau, \delta}^{\epsilon}(x_{0}) =  \min_{B_{1}'}  u_{\tau \tau, \delta}^{\epsilon}(x).$$
Recalling (9) and using Hopf's Lemma we see that,
$$\frac{1}{\epsilon}  u_{\tau \tau, \delta}^{\epsilon}(x_{0}) \geq (u_{y}^{\epsilon})_{\tau \tau, \delta}(x_{0}) > 0.$$
In particular $u_{\tau \tau, \delta}^{\epsilon}$ cannot achieve a negative minimum on the set $\{u^{\epsilon}_{y} = \frac{1}{\epsilon} u^{\epsilon} \}$. This is our desired contradiction.  Hence the minimum points of $u_{\tau \tau, \delta}^{\epsilon}(x)$ must be achieved on the set $\{u^{\epsilon} > 0 \}$ as desired.
\end{proof}

We observe that for every $x \in \{u^{\epsilon} > 0 \}$,
\\
\begin{equation}
\Delta (u_{\tau \tau, \delta}^{\epsilon}(x)) \leq 0.
\end{equation}
\\
Since $\forall x \in \{u^{\epsilon} > 0 \}$, it follows that $\Delta u^{\epsilon}(x) = 0$, a direct computation shows
\\
\[
\begin{split}
 \Delta (u_{\tau \tau, \delta}^{\epsilon}(x)) & = \frac{\Delta u^{\epsilon}(x + \delta \tau) +  \Delta u^{\epsilon}(x - \delta \tau)}{\delta^{2}} \\
 & = \frac{(\frac{\partial u}{\partial y} + \frac{\partial u}{\partial \nu})(x + \delta \tau) \mathcal{H}^{n} + (\frac{\partial u}{\partial y} + \frac{\partial u}{\partial \nu})(x - \delta \tau) \mathcal{H}^{n}}{\delta^{2}} \leq 0.
\end{split}
\]
\\
Here $\mathcal{H}^{n}$ denotes the n-dimensional Hausdorff Measure and $\frac{\partial}{\partial \nu} = -\frac{\partial}{\partial y}$ is the outward pointing normal. Thus it follows that in distribution $u_{\tau \tau, \delta}^{\epsilon}(x_{0})$ is superharmonic in $B_{1}$. In particular by the minimum principle for superharmonic functions we know that for some $x_{1} \in \partial B_{1-\delta_{0}}$, and for $\delta_{0}$ defined before (8),
$$u_{\tau \tau, \delta}^{\epsilon}(x_{0}) \geq \min_{\partial B_{1-\delta_{0}}} u_{\tau \tau, \delta}^{\epsilon}(x) = u_{\tau \tau, \delta}^{\epsilon}(x_{1}).$$
From standard harmonic estimates it follows that there exists a constant $C_{0}$ universal such that,
$$\|D^{2}u^{\epsilon}\|_{L^{\infty}(B_{\frac{\delta_{0}}{2}}(x_{1}))} \leq C_{0}.$$ 
Shrinking the neighborhood slightly we find that $\forall \; 0 < \delta < \frac{\delta_{0}}{4}$,
$$ \|u_{\tau \tau, \delta}^{\epsilon}\|_{L^{\infty}(B_{\frac{\delta_{0}}{4}}(x_{1}))} \leq C_{0}.$$
In particular $u_{\tau \tau, \delta}^{\epsilon}(x_{1}) \geq -C_{0}$. By the minimum principle for superharmonic functions we can propogate the estimate into the interior. That is, $\forall \; 0 < \delta < \frac{\delta_{0}}{4}$ and $\forall x \in B_{1-\delta_{0}}$, 
$$ u_{\tau \tau, \delta}^{\epsilon}(x)  \geq -C_{0}.$$
Finally letting $\delta \to 0$ we obtain our desired estimate,
$$u^{\epsilon}_{\tau \tau} \geq -C_{0}.$$
\end{proof}

\begin{remark} The semi-convexity in the tangential direction implies for any tangential direction $\tau$ that $\|u^{\epsilon}_{\tau}\|_{L^{\infty}(B_{1-\delta_{0}})} \leq C$. Combining this fact with the previous $L^{\infty}$ estimate for $u^{\epsilon}_{y}$ we know that our solution $u^{\epsilon}$ is uniformly Lipschitz continuous in $B_{1-\delta_{0}}$. In particular $\|\nabla u^{\epsilon}\|_{L^{\infty}(B_{1-\delta_{0}})} \leq C$.
\end{remark}

\begin{remark} We observe that, semi-convexity in the tangential directions implies by the equation semi-concavity in the $y$-direction. In particular,
$$ 0 \geq \Delta u^{\epsilon} = \sum_{i = 1}^{n-1} u^{\epsilon}_{\tau \tau} + u^{\epsilon}_{yy} \geq -(n-1)C_{0} + u^{\epsilon}_{yy}$$ 
So in particular,
\begin{equation}
\sup_{B_{1-\delta_{0}}^{+}} u^{\epsilon}_{yy} \leq (n-1) C_{0}
\end{equation}
\end{remark}

We conclude this section by stating a corollary that follows directly from the previous lemma. In particular we point out that we already have some control on the solution $u^{\epsilon}$ from above.

\begin{corollary} Let $u^{\epsilon}$ be a solution to the penalized boundary obstacle problem. Then for some universal constant $C$ in $B_{1-\delta_{0}}^{+}$,
\\
(a) $u^{\epsilon}(x',y) - Cy^{2}$ is concave in y, and $u^{\epsilon}(x',y) + |x'|^{2}$ is convex in $x'$.
\\
(b) $u^{\epsilon}_{y}(x',t) - u^{\epsilon}_{y}(x',s) \leq C(t-s). \; \; (t > s)$
\\
(c) $u^{\epsilon}(x',t) - u^{\epsilon}(x',0) \leq Ct^{2}.$
\\
(d) If $u^{\epsilon}(x,t) \geq h$ then in the half ball
$$ HB'_{\rho} = \{z \; : \; |z-x| \leq \rho, \; \;  \langle z-x, \nabla_{x} u^{\epsilon} \rangle \geq 0 \},$$
$$u^{\epsilon}(z,t) \geq h-C \rho ^{2}.$$
\end{corollary}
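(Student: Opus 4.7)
The plan is to extract each of the four items directly from Lemma 4, Remark 3, and the boundary condition $u^{\epsilon}_{y}(x',0) = \beta_{\epsilon}(u^{\epsilon}(x',0)) \leq 0$. No new machinery is required; the corollary is essentially a repackaging of the preceding tangential semi-convexity and normal semi-concavity in convenient geometric form.

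Parts (a), (b), and (c) follow by elementary one-variable calculus. For (a), I would simply read off the tangential semi-convexity $u^{\epsilon}_{\tau \tau} \geq -C_{0}$ from Lemma 4 and the normal semi-concavity $u^{\epsilon}_{yy} \leq (n-1) C_{0}$ from Remark 3, then choose a single constant $C$ large enough so that both $(u^{\epsilon} - Cy^{2})_{yy} \leq 0$ in $y$ and the Hessian of $u^{\epsilon} + |x'|^{2}$ in $x'$ is positive semi-definite. For (b), I would integrate the one-dimensional inequality $u^{\epsilon}_{yy}(x',\sigma) \leq C$ in $\sigma$ from $s$ to $t$. For (c), applying (b) at $s = 0$ together with the sign condition $u^{\epsilon}_{y}(x',0) = \beta_{\epsilon}(u^{\epsilon}(x',0)) \leq 0$ gives $u^{\epsilon}_{y}(x',\sigma) \leq C\sigma$, and a second integration in $\sigma$ from $0$ to $t$ produces the quadratic bound $u^{\epsilon}(x',t) - u^{\epsilon}(x',0) \leq \tfrac{C}{2} t^{2}$.

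Part (d) is where the convexity statement from (a) is really used. From the convexity of $x' \mapsto u^{\epsilon}(x',t) + |x'|^{2}$ in $B'_{1-\delta_{0}}$ I would extract the standard quadratic lower bound
\[
u^{\epsilon}(z,t) \geq u^{\epsilon}(x,t) + \langle \nabla_{x'} u^{\epsilon}(x,t), z - x \rangle - C\,|z-x|^{2},
\]
valid for $z$ in a tangential ball around $x$ contained in $B'_{1-\delta_{0}}$. On $HB'_{\rho}$ the inner-product term is non-negative by the defining inequality $\langle z - x, \nabla_{x} u^{\epsilon} \rangle \geq 0$, so combining with $u^{\epsilon}(x,t) \geq h$ yields $u^{\epsilon}(z,t) \geq h - C\rho^{2}$ at once.

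The only thing to be careful about is in (d), where one must verify that the quadratic correction produced by the convexity in (a) has the right sign after being paired with the half-ball constraint; since the definition of $HB'_{\rho}$ precisely kills the sign of the linear term, this is immediate. I therefore expect the full proof to amount to just a few lines per item, with no genuine obstacle.
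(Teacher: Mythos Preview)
Your proposal is correct and matches the paper's own proof essentially line for line: the paper also derives (a) as a restatement of Lemma~4 and Remark~3, obtains (b) from the monotonicity of $u^{\epsilon}_{y}-2Cy$, gets (c) by integrating $u^{\epsilon}_{y}\le Cs$, and proves (d) by writing the convexity inequality for $u^{\epsilon}(\cdot,t)+C|\cdot|^{2}$ and dropping the linear term on $HB'_{\rho}$. Your explicit mention of the boundary sign condition $u^{\epsilon}_{y}(x',0)\le 0$ in (c) is a small clarification the paper leaves implicit, but otherwise the arguments coincide.
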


\begin{proof}
a) This is just a restatatement of the semi-convexity estimate in the tangential directions and the semi-concavity estimate in the normal direction.
\\
\\
b) We have that $u^{\epsilon}_{y} - 2Cy$ is decreasing therefore,
$$u^{\epsilon}_{y}(x,t) - 2Ct \leq u^{\epsilon}_{y}(x,s) - 2Cs.$$
\\
c) Integrating the inequality $u^{\epsilon}_{y} \leq Cs$ from $0$ to $t$ gives the desired inequality.
\\
\\
d) From convexity we know,
$$u^{\epsilon}(z,t) + C|z|^{2} \geq u^{\epsilon}(x,t) + C|x|^{2} + \langle z-x, \nabla_{x}u(x,t) + 2Cx \rangle.$$
Hence, if $u^{\epsilon}(x,t) \geq h$ in $HB'_{\rho} (x)$, then, 
$$u^{\epsilon}(z,t) \geq u^{\epsilon}(x,t) - C |x-z|^{2} \geq h - C \rho^{2}.$$  

\end{proof}

\section{Growth Estimate from the Interface}

In this section we prove an estimate of technical interest. We show that $u^{\epsilon}_{y}$ has a uniform $C^{1,\alpha}$ growth from the set $\partial \{u^{\epsilon} > 0 \}$. We point out that our argument is very close to the argument presented in (\cite{CF13}). We repeat the main steps of the argument to check that all estimates are independent of $\epsilon$. The idea is to use the semi-concavity estimate and an iteration argument to obtain the desired H\"{o}lder growth.  

\begin{lemma} Let $u^{\epsilon}$ be our solution in $\Gamma_{1}$ and let $0 \in \partial \{u^{\epsilon} > 0 \}$. Then there exists two constants $K_{1} > 0$ and $\mu \in (0,1)$ such that
$$\inf_{\Gamma_{4^{-k}}} u^{\epsilon}_{y} \geq -K_{1} \mu^{k}.$$
\end{lemma}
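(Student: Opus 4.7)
The plan is to argue by induction on $k$. The base case $k=0$ is immediate from the uniform bound $\|u^{\epsilon}_{y}\|_{L^{\infty}(B_{1})} \leq C$ of Lemma 2, upon choosing $K_{1} \geq C$. For the inductive step, I would rescale: set $\tilde{u}(x) := 4^{k} u^{\epsilon}(4^{-k} x)$ on $\Gamma_{1}$. A direct computation shows that $\tilde{u}$ solves a penalized problem with parameter $\tilde{\epsilon} = 4^{k} \epsilon$, still satisfies $\tilde{u}(0) = 0$, and that the first-order quantities are preserved pointwise: $\tilde{u}_{y}(x) = u^{\epsilon}_{y}(4^{-k}x)$. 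Consequently the inductive bound becomes $\inf_{\Gamma_{1}} \tilde{u}_{y} \geq -M$ with $M := K_{1} \mu^{k}$, while the tangential semi-convexity constant of $\tilde{u}$ improves to $4^{-k} C_{0}$ and the semi-concavity constant in $y$ improves likewise. This reduces the induction step to a fixed-scale decay statement: for some universal $\mu \in (0,1)$, any such rescaled solution satisfies $\inf_{\Gamma_{1/4}} \tilde{u}_{y} \geq -\mu M$.

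To prove the fixed-scale decay, let $(x_{0}, y_{0}) \in \overline{\Gamma_{1/4}}$ attain $\inf_{\Gamma_{1/4}} \tilde{u}_{y} =: -M'$. If the infimum is attained at an interior point, I use the semi-concavity of $\tilde u$ in $y$ (Remark 3) to move the minimum onto the flat face $\{y=0\}$, losing only $Cy_{0} \leq C/4$. On the flat face, the boundary relation $\tilde{u}_{y} = \beta_{\tilde{\epsilon}}(\tilde{u})$ turns smallness of $\tilde{u}_{y}$ into smallness of $\tilde{u}$: at the minimizer, $\tilde{u}(x_{0},0) = \tilde{\epsilon}\, \tilde{u}_{y}(x_{0},0) \leq -\tilde{\epsilon} M'/2$ provided $M'$ dominates $C/4$.

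Next, I exploit tangential semi-convexity together with $\tilde{u}(0) = 0$ to obstruct such large negative values near the origin. Since the function $\tilde{u}(\cdot, 0) + \frac{1}{2}(4^{-k}C_{0})|\cdot|^{2}$ is convex and vanishes at the origin, it admits a supporting affine function $p \cdot x$, so $\tilde{u}(x,0) \geq p \cdot x - \frac{1}{2} 4^{-k} C_{0} |x|^{2}$ on $B'_{1/4}$. In particular, the coincidence set $\{\tilde{u} \geq 0\} \cap \{y=0\}$ contains a definite portion (a half-space up to the small quadratic defect from the improved semi-convexity constant) of $B'_{1/4}$, on which $\tilde{u}_{y} \equiv 0$. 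Comparing the non-negative harmonic function $-\tilde{u}_{y}$ on $B_{1/2}^{+}$ with a harmonic barrier vanishing on this substantial portion of the flat boundary then yields $-\tilde{u}_{y} \leq \theta M$ on $\Gamma_{1/4}$ for some universal $\theta < 1$, and we take $\mu = \theta$.

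The main obstacle is closing the argument uniformly in $\tilde{\epsilon}$. The boundary relation $\tilde{u}_{y} = \tilde{u}/\tilde{\epsilon}$ on $\{\tilde{u} < 0\}$ degenerates as $\tilde{\epsilon} \to 0$: boundary values of $\tilde{u}_{y}$ may equal $-M$ on sets of diameter as small as $\tilde{\epsilon} M / L$, so one cannot simply invoke standard boundary H\"{o}lder estimates for harmonic functions. The trade-off that must be made explicit is between the improving tangential semi-convexity constant at each scale (factor $4^{-k}$) and the growing rescaled penalization $\tilde{\epsilon} = 4^{k}\epsilon$; a delicate case analysis, splitting on whether the supporting slope $p$ is small or of definite size, is needed to extract a single $\mu \in (0,1)$ independent of $k$ and $\epsilon$ so that the induction closes.
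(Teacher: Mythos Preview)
Your inductive framework and rescaling are set up correctly, and the paper proceeds the same way. The genuine gap is in the fixed-scale decay step, which you yourself flag as incomplete. Your mechanism for producing a ``good'' set on $\{y=0\}$ is the supporting hyperplane of the semi-convex function $\tilde u(\cdot,0)$ at the origin: from $\tilde u(x,0)\ge p\cdot x-\tfrac12 4^{-k}C_0|x|^2$ you want $\{\tilde u\ge 0\}$ to contain a definite half-ball on which $\tilde u_y\equiv 0$. But nothing in the hypotheses forces $|p|$ to be of unit size; if $|p|$ is small the lower bound is $-O(4^{-k})$ and the set $\{\tilde u(\cdot,0)\ge 0\}$ may have no definite measure. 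The fallback you sketch, using the boundary relation $\tilde u_y=\tilde u/\tilde\epsilon$ when $|p|$ is small, produces a bound $\tilde u_y\ge -O(4^{-k})/\tilde\epsilon=-O(4^{-2k})/\epsilon$, which is not comparable to $\mu K_1\mu^k$ uniformly in $\epsilon$. So the promised ``delicate case analysis'' is precisely where the argument has to do real work, and as written it is missing.

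The paper avoids this issue by a different route. Instead of trying to locate the set $\{\tilde u_y=0\}$ directly, it builds the auxiliary harmonic function
\[
w^{\epsilon}(x,y)=\bar u^{\epsilon}(x,y)-\frac{L}{K_1(4\mu)^{k_0}}\bigl(|x|^2-(n-1)y^2\bigr),
\]
notes that $w^{\epsilon}(0)=0$ while $w^{\epsilon}<0$ on $\{u^{\epsilon}<0\}\cap\{y=0\}$, and applies Hopf's lemma to force the nonnegative maximum onto $\partial\Gamma_{1/2}\setminus\{y=0\}$. This yields a single point $(x_0,y_0)$ with $\bar u^{\epsilon}(x_0,y_0)\ge -CL/(K_1(4\mu)^{k_0})$, and then tangential semi-convexity (Corollary~1(d)) spreads this lower bound to an entire tangential half-ball $HB'_{1/2}$. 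Only at this stage is the boundary relation invoked, via the Taylor inequality (16), to convert the lower bound on $\bar u^{\epsilon}$ into $\bar u^{\epsilon}_y(x,0)\ge -\tfrac12$ on $HB'_{1/2}$. The Poisson-kernel estimate (19) then gives $\bar u^{\epsilon}_y\ge-\eta$ at a fixed positive height, and semi-concavity in $y$ brings this down to all of $\Gamma_{1/4}$. The point is that the auxiliary $w^{\epsilon}$ and the half-ball spreading manufacture the definite-size good set without ever needing to control a supporting slope or split on the size of $\tilde\epsilon$; this is the idea your proposal is missing.
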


\begin{proof}
We proceed to prove the lemma using mathematical induction. 
\\
\\
\textbf{Case k =1}: The base case follows from the uniform estimates obtained previously on $u^{\epsilon}_{y}$ (Lemma 2). 
\\
\textbf{Induction Step}: Assume, for some constants $K_{1} > 0$ and $\mu \in (0,1)$ to be chosen later, the result is true for some $k = k_{0}$, i.e.,
\\
\begin{equation}
\inf_{\Gamma_{4^{-k_{0}}}} u^{\lambda}_{y} \geq -K_{1} \mu^{k_{0}}.
\end{equation}
\\
We start by renormalizing the solution inside $\Gamma_{1}$. We define,
\\
\begin{equation}
\bar{u}^{\lambda}(x,y) = \frac{1}{K_{1}} \left(\frac{4}{\mu} \right)^{k_{0}}u^{\lambda} \left(\frac{x}{4^{k_{0}}}, \frac{y}{4^{k_{0}}}\right).
\end{equation}
\\
We obtain the following scaled estimates,
\\
\begin{equation}
\begin{split}
 (i) & \; \; \inf_{\Gamma_{1}} \bar{u}^{\lambda}_{y} \geq -1.\\
 (ii) &\; \; \bar{u}^{\lambda}_{yy} \leq \frac{(n-1)C_{0}}{K_{1}(4 \mu)^{k_{0}}}.
\end{split}
\end{equation}
\\
Recall that $(n-1)C_{0}$ is the semi-concavity estimate found before (11). Fix $L = \bar{C}C_{0}$ for $\bar{C} >>1$ and define
\\
\begin{equation}
w^{\epsilon}(x,y) = \bar{u}^{\epsilon}(x,y) - \frac{L}{K_{1}(4 \mu)^{k_{0}}} \left[ |x|^{2} - (n-1)y^{2} \right].
\end{equation}
\\
We make the following observations about $w^{\epsilon}(x,y)$: 
\\
1. $w^{\epsilon}$ is harmonic in the interior of $\Gamma_{1/2}.$
\\
2. $w^{\epsilon}$ is strictly negative on the set $\{u^{\epsilon}_{y} = \frac{1}{\epsilon} u^{\epsilon} \}$. 
\\
3. $w^{\epsilon}$ approaches $0$ at the origin. 
\\
4. By Hopf's Lemma $w^{\epsilon}$ obtains its non-negative maximum on $\partial [\Gamma_{1/2} \setminus \{ y = 0 \}]$.  
\\
\\
We observe that we must consider two distinct cases.
\\
\\
\textbf{Case 1}: The maximum of $w^{\epsilon}$ is attained on $\partial \Gamma_{1/2} \cap \{ y = \frac{1}{2\sqrt{2n}}\}.$
\\
This implies that there exists $x_{0} \in B_{1/2}'$ such that 
$$\bar{u}^{\epsilon} \left(x_{0}, \frac{1}{2\sqrt{2n}}\right) \geq -C \frac{L}{K_{1}(4\mu)^{k_{0}}}.$$ 
Using part (d) of Corollary 1, we observe that there exists an (n-1) dimensional half ball $HB'_{1/2}\left(x_{0},\frac{1}{2\sqrt{2n}} \right)$ such that
$$\bar{u}^{\epsilon} \left(x, \frac{1}{2\sqrt{2n}}\right) \geq -\frac{C}{2} \frac{L}{K_{1}(4\mu)^{k_{0}}} \; \; \; \; \forall x \in HB'_{1/2}\left(x_{0},\frac{1}{2\sqrt{2n}} \right).$$ 
Recalling the definition of the penalization, $\beta_{\epsilon}(u^{\epsilon})$, and by the semi-concavity estimate for $ \bar{u}^{\epsilon}$ (14), a Taylor expansion on the set $\{u^{\epsilon}_{y} = \frac{1}{\epsilon} u^{\epsilon} \}$, gives us the following inequality,
\\
\begin{equation}
u^{\epsilon}(x,y) \leq u^{\lambda}(x,y) - u^{\epsilon}(x,0) \leq u^{\epsilon}_{y}(x,0) \cdot y  + (n-1)C_{0}y^{2}.
\end{equation}
\\
Moreover we obtain the following estimate,
\\ 
\begin{equation}
\bar{u}^{\epsilon}_{y} \left(x, 0 \right) \geq -C \frac{L}{K_{1}(4\mu)^{k_{0}}} \; \; \; \; \forall x \in HB'_{1/2}\left(x_{0},0 \right). 
\end{equation} 
\\

\textbf{Case 2}: The maximum is attained on $\partial \Gamma_{1/2} \setminus \{ y = \frac{1}{2\sqrt{2n}}\}.$
\\
\\
Let $(x_{0}',y_{0}')$ be the maximum point. Since this point is on the lateral side of the cylinder, we have that $|x_{0}'|^{2} \geq 2(n-1) |y_{0}|^{2}$. This provides for us the following estimate,
$$\bar{u}^{\epsilon} \left(x_{0}', y_{0}' \right) \geq  \frac{L}{K_{1}(4\mu)^{k_{0}}}.$$ 
As before we know that there exists an (n-1) dimensional half ball $HB'_{1/2}\left(x_{0}',y_{0}' \right)$ such that,
$$\bar{u}^{\epsilon} \left(x, y_{0}' \right) \geq \frac{L}{2K_{1}(4\mu)^{k_{0}}} \; \; \; \; \forall x \in HB'_{1/2}\left(x_{0}',y_{0}' \right).$$ 
Again recalling the definition of the penalization $\beta_{\epsilon}(u^{\epsilon})$, and using (16) we obtain,
\\
\begin{equation}
\bar{u}^{\epsilon}_{y} \left(x, 0 \right) \geq 0 \; \; \; \; \forall x \in HB'_{1/2}\left(x_{0}',0 \right).
\end{equation} 
\\
Thus in both cases we reach the conclusion that there exists $C_{1} > 0$ and a point $\bar{x} \in B_{1/2}'$ such that 
$$\bar{u}^{\epsilon}_{y} \left(x, 0 \right) \geq -C_{1} \frac{L}{K_{1}(4\mu)^{k_{0}}} \; \; \; \; \forall x \in HB'_{1/2}\left(\bar{x},0 \right).$$ 
Furthermore if we choose $K_{1}$ and $\mu$ satisfying, $K_{1} > 2C_{1}$ and $\mu \geq \frac{1}{4}$ then we have
$$\bar{u}^{\epsilon}_{y} \left(x, 0 \right) > - \frac{1}{2}.$$
Recalling again (14), we use the following result for harmonic functions which is a consequence of the Poisson Representation Formula. It gives us pointwise information from a measure estimate. See \cite{S09} for more details.

\begin{remark} Let $v \leq 0$, $\Delta v = 0$ in $B_{1}'(x_{0}) \times (0,1)$, and continuous in $B_{1}'(x_{0}) \times [0,1]$. Assume $v(x,0) \geq -1/2$ in $B_{\delta}'(x^{*})$ for some $B_{\delta}(x^{*}) \subset B_{1}(x_{0})$. Then,
\begin{equation} 
v(x,y) \geq - \eta(\delta) \; \; \text{in} \; \overline{B_{1/2}'(x_{0})} \times [1/4,3/4].
\end{equation}
\end{remark}

Using (19) we obtain the existence of a constant $\eta < 1$ such that,
$$\bar{u}^{\epsilon}_{y} \left(x, \frac{1}{4\sqrt{2n}} \right) > - \eta \; \; \; \; \forall x \in B'_{1/4}.$$ 
Once more applying the semi-concavity estimate we obtain for a $K_{1}$ sufficiently large,
$$\bar{u}^{\epsilon}_{y} \left(x, y \right) > - \eta - \frac{(n-1)C_{0}}{K_{1}(4 \mu)^{k_{0}}} =: -\mu > -1.$$ 
Rescaling back we find for $k = k_{0} + 1$ our desired inequality,
$$\inf_{\Gamma_{4^{-k}}} u^{\epsilon}_{y} \geq -K_{1} \mu^{k}.$$
\end{proof}

\begin{remark} We observe that the conclusion of this lemma implies the existence of an $ 0 < \alpha < 1$ such that $\forall r \leq 1-\delta_{0}$, 
\begin{equation}
\sup_{\Gamma_{r}} |u^{\epsilon}_{y}| \leq Cr^{\alpha}.
\end{equation}
In particular for $x \in \{u^{\epsilon}_{y} = \frac{1}{\epsilon}u^{\epsilon} \}$,
\begin{equation}
u^{\epsilon}_{y}(x) \leq d^{\alpha}(\partial \{u^{\epsilon} > 0 \}).
\end{equation}
\begin{equation}
u^{\epsilon}_{y}(\epsilon x) \leq \epsilon^{\alpha} d^{\alpha}(\partial \{u^{\epsilon} > 0 \}).
\end{equation}
\end{remark}

\section{Uniform $C^{1,1/2}$ Estimate for Global Solutions}

In this section we restrict our attention to global solutions of the penalized boundary obstacle problem.  More specifically we are interested in solutions that are tangentially convex, i.e. $u^{\epsilon}_{\tau \tau} \geq 0$. We remark that this implies that the set $\{(x,0) : u^{\epsilon}_{y}(x,0) < 0 \}$ is a convex set. We will prove the uniform $C^{1,1/2}$ estimate for this class of solutions. Our result relies on a monotonicity formula and the first eigenvalue of the following problem,

\begin{lemma} Let $\nabla_{\theta}$ denote the surface gradient on the unit sphere $\partial B_{1}$. Consider,
$$ \lambda_{0} = \inf_{\stackrel{w \in H^{1/2}(\partial B_{1}^{+})}{w = 0 \; \textnormal{on} \; (\partial B_{1}'(x,0))^{-}}} \frac{ \int_{\partial B_{1}^{+}} | \nabla_{\theta} w|^{2} \; dS}{\int_{\partial B_{1}^{+}} |w|^{2} \; dS}.$$

Where we let $\partial B_{1}'(x,0)^{-} = \{x' = (x'', x_{n-1}) \in B_{1}' \; \; | \; \;  x_{n-1} < 0 \}$. Then,
\\
\begin{equation} 
\lambda_{0} = \frac{2n-1}{4}.
\end{equation}
\end{lemma}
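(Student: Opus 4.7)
The plan is to compute $\lambda_0$ by producing an explicit extremal on $\partial B_1^+$ and then showing no admissible competitor can do strictly better. The natural candidate is the trace on $\partial B_1^+$ of the harmonic function
$$u_0(x,y) := \mathrm{Re}\!\left((x_{n-1}+iy)^{1/2}\right),$$
defined on the upper half-space $\{y \geq 0\}$. In polar coordinates $(s,\phi)$ for the $(x_{n-1},y)$-half plane, with $\phi \in [0,\pi]$, one has $u_0 = s^{1/2}\cos(\phi/2)$. A direct check shows that $u_0$ is harmonic in $\{y>0\}$ (it is the real part of a holomorphic function in the two variables $(x_{n-1},y)$ and is independent of $x_1,\dots,x_{n-2}$), vanishes on $\{y=0,\,x_{n-1}<0\}$ (where $\phi=\pi$), has $\partial_y u_0 = 0$ on $\{y=0,\,x_{n-1}>0\}$ (where $\phi=0$), and is homogeneous of degree $1/2$ in all ambient variables.

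For the upper bound, set $w_0 := u_0|_{\partial B_1^+}$; then $w_0$ vanishes on $(\partial B_1')^-$ and is admissible. Substituting the homogeneous ansatz $u_0(r\theta) = r^{1/2} w_0(\theta)$ into $\Delta u_0 = 0$ and using the decomposition $\Delta = \partial_{rr} + \tfrac{n}{r}\partial_r + r^{-2}\Delta_\theta$ in $\mathbb{R}^{n+1}$ produces the identity $-\Delta_\theta w_0 = \tfrac{1}{2}\!\left(\tfrac{1}{2}+n-1\right)\!w_0 = \tfrac{2n-1}{4}\,w_0$. Multiplying by $w_0$ and integrating by parts on $\partial B_1^+$, the boundary contribution on $\partial B_1'$ vanishes: on $(\partial B_1')^-$ because $w_0=0$, and on $(\partial B_1')^+$ because $\partial_y u_0|_{y=0^+}=0$ translates precisely to the vanishing of the outer conormal derivative of $w_0$ there. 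This yields $\int_{\partial B_1^+}|\nabla_\theta w_0|^2 = \tfrac{2n-1}{4}\int_{\partial B_1^+} w_0^2$, and hence $\lambda_0 \leq \tfrac{2n-1}{4}$.

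For the matching lower bound, let $w$ be a minimizer (existence follows from standard weak compactness combined with the Poincar\'e inequality afforded by the Dirichlet condition on $(\partial B_1')^-$). Write $\lambda_0 = \alpha(\alpha+n-1)$ for the positive root $\alpha$; the homogeneous extension $u(r\theta) := r^\alpha w(\theta)$ is harmonic in $\mathbb{R}^{n+1}_+$, vanishes on $\{y=0,\,x_{n-1}<0\}$, and has $\partial_y u = 0$ on $\{y=0,\,x_{n-1}>0\}$. The task reduces to showing $\alpha \geq 1/2$. The problem is invariant under the action of $SO(n-2)$ on $(x_1,\dots,x_{n-2})$, so a Perron-type argument (the positive first eigenfunction is unique up to scalar and equals its orbit average) lets us assume $w$ is $SO(n-2)$-invariant, meaning $u$ depends only on $x_{n-1}$, $y$, and $r_\perp := (x_1^2+\cdots+x_{n-2}^2)^{1/2}$. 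Writing $u = s^\alpha F(\phi, r_\perp/s)$ in cylindrical coordinates $(s,\phi,r_\perp)$ and separating variables, the angular ODE for $F$ with boundary conditions $F_\phi(0,\cdot)=0$ and $F(\pi,\cdot)=0$ is the classical Zaremba mixed problem, whose smallest admissible homogeneity is $\alpha=1/2$, attained by the $r_\perp$-independent profile $\cos(\phi/2)$. This gives $\lambda_0 \geq \tfrac{2n-1}{4}$.

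The main obstacle is the rigorous execution of the second step: reducing to the symmetric class and verifying that no $r_\perp$-dependent separated mode produces a strictly smaller homogeneity. An alternative that sidesteps the symmetry reduction is to perform an even reflection of $u$ across $\{y=0,\,x_{n-1}>0\}$ to obtain a harmonic function on the slit domain $\mathbb{R}^{n+1}\setminus\{y=0,\,x_{n-1}\leq 0\}$ that vanishes on the slit and is homogeneous of degree $\alpha$. A conformal change of variables in the 2D plane transverse to the slit then reduces the question to a planar eigenvalue problem where $\alpha\geq 1/2$ is elementary: from $f''+\alpha^2 f=0$ with $f'(0)=0$ and $f(\pi)=0$, one is forced into $\alpha\pi = \pi/2+k\pi$, $k\in\mathbb{Z}_{\ge 0}$, whose smallest positive root is $1/2$.
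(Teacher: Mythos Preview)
The paper does not prove this lemma; immediately after stating it, the author writes ``We do not prove this theorem here but refer to \cite{AC06} where it is proven in detail.'' So there is no in-paper argument to compare against, and your attempt already goes further than what the paper supplies.

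Your upper bound is correct: the trace $w_0$ of $u_0=\mathrm{Re}\bigl((x_{n-1}+iy)^{1/2}\bigr)$ is admissible, you correctly verify that the natural Neumann condition on the positive half of the equator kills the boundary term in the integration by parts, and the Rayleigh quotient evaluates to $(2n-1)/4$. (There is a minor indexing mismatch between your coordinate labels and the ambient dimension $n+1$ you invoke in the radial Laplacian, but the computation is right once this is reconciled; the paper's own notation is not fully consistent on this point either.)

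The lower bound, however, is not established. You yourself flag the difficulty in the symmetry-reduction route: after reducing to an $SO$-invariant first eigenfunction, the homogeneous extension still depends on three effective variables $(r_\perp,x_{n-1},y)$, so the ``angular ODE'' is in fact a two-variable PDE, and asserting that the minimum is realized by the $r_\perp$-independent profile is precisely the point at issue. Your alternative via even reflection to the slit domain is fine up to the reflection, but the step ``a conformal change of variables in the 2D plane transverse to the slit reduces the question to a planar eigenvalue problem'' fails in ambient dimension larger than two: conformal maps do not preserve harmonicity there, so the ODE $f''+\alpha^2 f=0$ with $f'(0)=0$, $f(\pi)=0$ does not follow for the general eigenfunction. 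As written, neither route closes.

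A short, self-contained fix is available using only what you have already set up. The mixed Dirichlet--Neumann problem on $\partial B_1^+$ (Dirichlet on $(\partial B_1')^-$, natural Neumann on $(\partial B_1')^+$) comes from a symmetric coercive bilinear form on the constrained $H^1$-space, so its spectrum is discrete, eigenfunctions for distinct eigenvalues are $L^2$-orthogonal, and the first eigenfunction may be taken strictly positive. Your $w_0$ is a strictly positive eigenfunction with eigenvalue $(2n-1)/4$ satisfying both boundary conditions. If $\lambda_0<(2n-1)/4$, its positive first eigenfunction would be $L^2$-orthogonal to $w_0$, which is impossible for two strictly positive functions. Hence $\lambda_0=(2n-1)/4$.
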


We do not prove this theorem here but refer to (\cite{AC06}) where it is proven in detail. We turn our attention instead to proving a monotonicity result which is crucial in the sequel to prove the sharp estimate.

\begin{lemma} Let $w$ be any continuous function on $\overline{B_{r}^{+}}$ with the following properties: 
\\
1. $\Delta w = 0$ in $B_{r}^{+}$. 
\\
2. $w(0) = 0$ 
\\
3. $w(x,0) \leq 0$ and  $w(x,0) w_{\nu}(x,0) \leq 0$ $\forall x \in  B_{r}'.$ 
\\
4. $ \{x \in B_{r}'  \; | \;  w(x,0) <  0 \}$ is nonempty and convex. 
\\
\\
Define
\\
\begin{equation}
\varphi(r) = \frac{1}{r} \int_{B_{r}^{+}} \frac{ | \nabla w|^{2}}{|x|^{n-1}}. 
\end{equation}
\\
Then $\forall r \in (0,R)$,
\\
(i) $\varphi(r) < +\infty$
\\
(ii) $\varphi (r)$ is monotone increasing in $r$.
\end{lemma}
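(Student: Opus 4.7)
The plan is to compute $\varphi'(r)$ and reduce its nonnegativity to the sharp spectral inequality of Lemma~5 on a hemisphere. Setting $\Phi(r):=\int_{B_r^+}|\nabla w|^2/|x|^{n-1}\,dx$ so that $\varphi(r)=\Phi(r)/r$, the inequality $\varphi'(r)\ge 0$ is equivalent to $r\Phi'(r)\ge \Phi(r)$. Write $S_r^+:=\partial B_r\cap\{y>0\}$ for the hemispherical part of $\partial B_r^+$.

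To bound $\Phi(r)$ from above I would exploit harmonicity and integrate by parts. Starting from the identity $|\nabla w|^2=\nabla\!\cdot\!(w\nabla w)$ (using $\Delta w=0$), Green's formula against the weight $|x|^{-(n-1)}$ produces a boundary integral on $\partial B_r^+$ plus a bulk correction $(n-1)\int_{B_r^+}w\,w_r/|x|^n\,dx$ coming from $\nabla|x|^{-(n-1)}=-(n-1)x/|x|^{n+1}$. In spherical coordinates, using $w(0)=0$, the bulk correction collapses to $\tfrac{n-1}{2r^n}\int_{S_r^+}w^2\,dS$. The boundary integral splits into a hemispherical piece $r^{-(n-1)}\int_{S_r^+}w\,w_r\,dS$ and a flat-face piece $-\int_{B_r'}w\,w_y\,|x|^{-(n-1)}\,dx'$; the latter is nonpositive by hypothesis~3 (with $\nu=-e_y$ on $B_r'$, so $w\,w_\nu=-w\,w_y\le 0$), and discarding it yields
\begin{equation*}
\Phi(r)\le \frac{1}{r^{n-1}}\int_{S_r^+}w\,w_r\,dS+\frac{n-1}{2r^n}\int_{S_r^+}w^2\,dS.
\end{equation*}

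A direct co-area computation gives $\Phi'(r)=r^{1-n}\int_{S_r^+}|\nabla w|^2\,dS$, so after multiplying $r\Phi'(r)\ge\Phi(r)$ through by $r^{n-1}$ it suffices to prove
\begin{equation*}
r\int_{S_r^+}|\nabla w|^2\,dS\ge \int_{S_r^+}w\,w_r\,dS+\frac{n-1}{2r}\int_{S_r^+}w^2\,dS.
\end{equation*}
I would decompose $|\nabla w|^2$ into its radial and tangential parts on $S_r^+$, apply the pointwise completion of squares $r\,w_r^2\ge w\,w_r-w^2/(4r)$, and invoke Lemma~5 on the tangential part. Applicability of Lemma~5 is where hypothesis~4 enters: after a rotation the convex coincidence set $\{w(\cdot,0)<0\}$ lies in $\{x_{n-1}\le 0\}\cap B_r'$, and combined with $w\le 0$ on $B_r'$ this forces $w\equiv 0$ on the other half-disk, putting $w|_{\partial B_r^+}$ in the admissible class. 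Rescaled to radius $r$ the eigenvalue inequality reads $\int_{S_r^+}(|\nabla w|^2-w_r^2)\,dS\ge (\lambda_0/r^2)\int_{S_r^+}w^2\,dS$; combined with the square-completion, the slack in the target inequality is bounded below by $(\lambda_0-\tfrac14-\tfrac{n-1}{2})/r\cdot\int_{S_r^+}w^2\,dS$, which is identically zero precisely because $\lambda_0=(2n-1)/4$.

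The principal obstacle is exactly this sharp algebraic cancellation: the eigenvalue $(2n-1)/4$ of Lemma~5 is matched to the exponent $1$ in the normalization $\varphi=\Phi/r$, and neither can be relaxed without producing a residual of the wrong sign. Hypothesis~4 is indispensable, since without convexity one cannot reduce to the Dirichlet-on-half-equator class of Lemma~5. Finiteness (part~(i)) is a byproduct: in spherical coordinates $dx/|x|^{n-1}=\rho\,d\rho\,d\sigma$, so the integral converges for $r<R$ in view of standard interior harmonic estimates together with the H\"older growth of $w$ near $0$ furnished by Remark~3.
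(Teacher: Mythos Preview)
Your argument is correct and follows essentially the same route as the paper: both expand $\Phi(r)$ by integrating by parts against the fundamental-solution weight $|x|^{-(n-1)}$, discard the flat-face term using hypothesis~3, split the spherical gradient into radial and tangential parts, and close via the sharp eigenvalue $\lambda_0=(2n-1)/4$. The only cosmetic difference is that you handle the cross term $\int_{S_r^+} w\,w_r$ by the pointwise square completion $r\,w_r^2\ge w\,w_r-w^2/(4r)$, whereas the paper reaches the identical inequality through Cauchy--Schwarz followed by AM--GM; your phrasing $r\Phi'\ge\Phi$ is likewise equivalent to the paper's direct computation of $\varphi'(r)\ge 0$.
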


\begin{proof}
Harmonicity of $w$ in the interior gives to us the following identity,
$$ \Delta w^{2} = 2w \Delta w + 2 |\nabla w|^{2} = 2 | \nabla w|^{2}.$$
This allows us to rewrite the integrand as,
$$ \varphi(r) = \frac{1}{2r} \int_{B_{r}^{+}} \frac{ \Delta w^{2}}{|x|^{n-1}}.$$
It will be sufficient to prove the monotonicity of $\varphi(r)$ since $\varphi(1) < + \infty$. Differentiating $\varphi(r)$ we obtain,
\\
\begin{equation}
\varphi'(r) = \frac{-1}{2r^{2}}  \int_{B_{r}^{+}} \frac{ \Delta w^{2}}{|x|^{n-1}}  + \frac{1}{r^{n}} \int_{\partial B_{r}^{+}} | \nabla w |^{2}. \tag{*}
\end{equation}
\\
Expanding out the first term gives us,
\\
\[
\begin{split}
\frac{1}{2r^{2}}  \int_{B_{r}^{+}} \frac{ \Delta w^{2}}{|x|^{n-1}} & = \frac{1}{r^{n+1}} \int_{(\partial B_{r})^{+}} w w_{\nu} + \frac{1}{2r^{2}} \int_{\{y = 0\} \cap \overline{B_{r}}} \frac{2ww_{\nu}}{|x|^{n-1}} ds - \frac{1}{2r^{2}} \int_{B_{r}^{+}} \nabla w^{2} \cdot \nabla (\frac{1}{|x|^{n-1}} ) ds.
\end{split}
\]
\\
Recalling that $w(0) = 0$, the last term in this expansion can be further expanded to get,
\\
\[
\begin{split}
 - \frac{1}{2r^{2}} \int_{B_{r}^{+}} \nabla w^{2} \cdot \nabla (\frac{1}{|x|^{n-1}} ) ds & =  \frac{n-1}{2r^{n+2}} \int_{(\partial B_{r})^{+}} w^{2} ds - \frac{1}{2r^{2}} \int_{\{y = 0\} \cap \overline{B_{r}}} w^{2} (\frac{1}{|x|^{n-1}}) \cdot \nu \; ds.
\end{split}
\]
\\
We observe that the second term in this expansion is zero, hence we obtain,
$$ - \frac{1}{2r^{2}} \int_{B_{r}^{+}} \nabla w^{2} \cdot \nabla (\frac{1}{|x|^{n-1}} ) ds = \frac{n-1}{2r^{n+2}} \int_{(\partial B_{r})^{+}} w^{2} ds.$$
Putting the above together we obtain,
\\
\[
\begin{split}
  \frac{1}{2r^{2}}  \int_{B_{r}^{+}} \frac{ \Delta w^{2}}{|x|^{n-1}} & = \frac{1}{r^{n+1}} \int_{(\partial B_{r})^{+}} w w_{\nu} + \frac{1}{2r^{2}} \int_{\{y = 0\} \cap \overline{B_{r}}} \frac{2ww_{\nu}}{|x|^{n-1}} ds + \frac{n-1}{2r^{n+2}} \int_{(\partial B_{r})^{+}} w^{2} ds.
\end{split}
\]
\\
An application of Cauchy-Schwarz to the first term allows us to continue the inequality,
$$ \leq (\frac{1}{2r^{n+2}} \int_{(\partial B_{r})^{+}} w^{2} ds)^{1/2}(\frac{2}{r^{n}} \int_{(\partial B_{r})^{+}} w_{\nu}^{2} ds)^{1/2} + \frac{n-1}{2r^{n+2}} \int_{(\partial B_{r})^{+}} w^{2} ds + \frac{1}{2r^{2}} \int_{\{y = 0\} \cap \overline{B_{r}}} \frac{2ww_{\nu}}{|x|^{n-1}} ds.$$ 
Moreover the positivity of the integrands allows us to integrate over the larger spatial domain $\partial B_{r}^{+}$. In particular we have,
$$ \leq (\frac{1}{2r^{n+2}} \int_{\partial B_{r}^{+}} w^{2} ds)^{1/2}(\frac{2}{r^{n}} \int_{\partial B_{r}^{+}} w_{\nu}^{2} ds)^{1/2} + \frac{n-1}{2r^{n+2}} \int_{\partial B_{r}^{+}} w^{2} ds + \frac{1}{2r^{2}} \int_{\{y = 0\} \cap \overline{B_{r}}} \frac{2ww_{\nu}}{|x|^{n-1}} ds.$$ 
Rewriting the spatial gradient in terms of the surface gradient we obtain,
$$ \int_{\partial B_{r}^{+}} | \nabla w |^{2} = \int_{\partial B_{r}^{+}} | \nabla_{\theta} w |^{2} + \int_{\partial B_{r}^{+}} w_{\nu}^{2}.$$
Putting this back into (*) we obtain,
\\
\[
\begin{split}
  \varphi'(r)  & \geq - \frac{2n-1}{4r^{n+2}}  \int_{\partial B_{r}^{+}} w^{2} ds -  \frac{1}{r^{n}} \int_{\partial B_{r}^{+}} w_{\nu}^{2} ds - \frac{1}{2r^{2}} \int_{\{y = 0\} \cap \overline{B_{r}}} \frac{2ww_{\nu}}{|x|^{n-1}} ds \\ 
& + \frac{1}{r^{n}} \int_{\partial B_{r}^{+}} | \nabla_{\theta} w |^{2} + \frac{1}{r^{n}} \int_{\partial B_{r}^{+}} w_{\nu}^{2}.
\end{split}
\]
\\
After cancellation we are reduced to,
$$ \geq - \frac{2n-1}{4r^{n+2}}  \int_{\partial B_{r}^{+}} w^{2} ds + \frac{1}{r^{n}} \int_{\partial B_{r}^{+}} | \nabla_{\theta} w |^{2} - \frac{1}{2r^{2}} \int_{\{y = 0\} \cap \overline{B_{r}}} \frac{2ww_{\nu}}{|x|^{n-1}} ds.$$
Since we are assuming $ \{x \in B_{r}' \; | \;  w(x,0) <  0 \}$ is nonempty and convex, this implies that $w$ vanishes on at least $(\partial B_{1}')^{-}$, and hence is admissable to the eigenvalue problem (Lemma 7). This implies in particular that 
$$\frac{ \int_{\partial B_{1}^{+}} | \nabla_{\theta} w|^{2} \; dS}{\int_{\partial B_{1}^{+}} |w|^{2} \; dS} \geq \frac{2n-1}{4}.$$
We are thus reduced to studying the positivity of the corrective term,
$$\varphi'(r) \geq - \frac{1}{2r^{2}} \int_{\{y = 0\} \cap \overline{B_{r}}} \frac{2ww_{\nu}}{|x|^{n-1}} ds.$$
Finally using the assumption that $w(x,0) w_{\nu}(x,0) \leq 0$ implies that,
$$- \frac{1}{2r^{2}} \int_{\{y = 0\} \cap \bar{B_{r}}} \frac{2ww_{\nu}}{|x|^{n-1}} ds \geq 0.$$
Thus we conclude,
$$ \varphi'(r) \geq 0 \; \; \text{for any} \; \; 0 < r \leq R.$$
In particular we have shown,
$$ \varphi(r) \leq \varphi(R) \; \; \text{for any} \; \; 0 < r \leq R.$$
\end{proof}

We now use the monotonicity of $\varphi(r)$ to conclude the sharp estimate for global solutions to the penalized boundary obstacle problem.

\begin{theorem} Let $u^{\epsilon}$ be a global solution to the penalized boundary obstacle problem. Then there exists a modulus of continuity $\omega: (0, \infty) \to (0, \infty)$ independent of $\epsilon$, such that $\omega(\delta) = O(\delta^{1/2})$ as $\delta \to 0$ and $\forall x,y \in B_{r/2}$ and $\forall \epsilon > 0$, 
\\
\begin{equation} 
|u_{y}^{\epsilon}(x) - u_{y}^{\epsilon}(y) |\leq |x-y|^{1/2}.
\end{equation}
\end{theorem}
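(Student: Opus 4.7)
The plan is to apply the monotonicity formula of Lemma 9 with $w = u_y^\epsilon$ centered at an arbitrary interface point, extract a Campanato-type $L^2$ energy decay for $\nabla u_y^\epsilon$, and combine this with interior harmonic estimates to obtain the claimed uniform $C^{1/2}$ modulus of continuity for $u_y^\epsilon$ on $B_{r/2}$.

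First I would verify that $w = u_y^\epsilon$ satisfies the four hypotheses of Lemma 9 at an arbitrary point of $\partial\{u^\epsilon > 0\}$, translated to the origin so that $u^\epsilon(0) = 0$. Harmonicity of $u^\epsilon$ in the upper half-ball gives $\Delta w = 0$ in $B_r^+$; the normalization together with $u_y^\epsilon = \beta_\epsilon(u^\epsilon)$ and $\beta_\epsilon(0) = 0$ gives $w(0) = 0$; admissibility of $\beta_\epsilon$ yields $w(x,0) \leq 0$. Differentiating the boundary condition in $y$ produces $w_y = u_{yy}^\epsilon = \beta_\epsilon'(u^\epsilon)\, u_y^\epsilon$, so
$$
w \cdot w_\nu = -w\, w_y = -\beta_\epsilon'(u^\epsilon)(u_y^\epsilon)^2 \leq 0
$$
using condition 4 of Definition 1. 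The fourth hypothesis, nonempty convexity of $\{w(x,0) < 0\}$, is precisely where global tangential convexity enters: the assumption $u^\epsilon_{\tau \tau} \geq 0$ forces the coincidence set $\{u^\epsilon(x,0) < 0\} = \{u_y^\epsilon(x,0) < 0\}$ to be convex, and nonemptiness follows because $0$ lies on its boundary.

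With the hypotheses verified, Lemma 9 gives $\varphi(r) \leq \varphi(R)$, and $\varphi(R)$ is controlled uniformly in $\epsilon$ by Lemma 2 together with interior harmonic estimates for $u_y^\epsilon$. Bounding the weight by $|x|^{n-1} \leq r^{n-1}$ inside $B_r^+$ converts the monotonicity bound into the Morrey-type inequality
$$
\int_{B_r^+} |\nabla u_y^\epsilon|^2 \, dx \leq C\, r^n,
$$
which in the ambient space $\mathbb{R}^{n+1}$ is exactly the Campanato characterization of $C^{1/2}$ regularity, with constant independent of $\epsilon$. Applied with center at every interface point, this yields the sharp growth $\sup_{B_\rho(x_0)} |u_y^\epsilon| \leq C\rho^{1/2}$ for every $x_0 \in \partial\{u^\epsilon > 0\}$.

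The final step is to patch the interface estimate with standard interior harmonic estimates: at $x$ with $d = d(x, \partial\{u^\epsilon > 0\})$, the function $u_y^\epsilon$ is harmonic on $B_{d/2}(x)$, and the interface growth at a nearest interface point to $x$ gives $\sup_{B_d(x)} |u_y^\epsilon| \leq C d^{1/2}$; interior Schauder then produces $[u_y^\epsilon]_{C^{1/2}(B_{d/4}(x))} \leq C$, uniformly in $\epsilon$, and a covering argument upgrades these local bounds to the claimed uniform modulus of continuity on $B_{r/2}$. The main conceptual obstacle, and the reason this section is restricted to global solutions, is the convexity hypothesis of Lemma 9; in the local setting the semi-convexity of Lemma 5 is weaker and will have to be corrected via an iterative scheme using the monotonicity formula, as foreshadowed in the introduction.
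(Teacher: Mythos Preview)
Your overall architecture matches the paper's: verify that $w=u_y^\epsilon$ satisfies the hypotheses of the monotonicity lemma, use $\varphi(r)\le\varphi(R)$ to get an $L^2$ energy decay, upgrade this to the pointwise growth $\sup_{B_{r/2}}|u_y^\epsilon|\le Cr^{1/2}$ at every interface point, and then patch with interior estimates. Two points deserve correction, one minor and one substantive.

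\medskip
\textbf{Minor.} Your verification of hypothesis 3 is not quite right. You differentiate the boundary identity $u_y^\epsilon(x,0)=\beta_\epsilon(u^\epsilon(x,0))$ in $y$ to claim $u_{yy}^\epsilon=\beta_\epsilon'(u^\epsilon)\,u_y^\epsilon$, but that identity lives only on $\{y=0\}$ and cannot be differentiated normally. The correct route is the one the global hypothesis hands you: tangential convexity $u_{\tau\tau}^\epsilon\ge 0$ together with $\Delta u^\epsilon=0$ forces $u_{yy}^\epsilon=-\sum_\tau u_{\tau\tau}^\epsilon\le 0$ up to the boundary, and since $u_y^\epsilon(x,0)\le 0$ this gives $w\,w_\nu=-u_y^\epsilon u_{yy}^\epsilon\le 0$.

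\medskip
\textbf{Substantive.} The patching step contains a genuine gap. You assert that for $x$ at distance $d$ from $\partial\{u^\epsilon>0\}$ the function $u_y^\epsilon$ is harmonic on $B_{d/2}(x)$ and then invoke interior Schauder. This is false precisely in the region that matters: if $x\in\{y=0\}$ lies in the penalized set $\{u^\epsilon<0\}$, then $B_{d/2}(x)$ meets $\{y=0\}$ inside that set, where $u_y^\epsilon$ is \emph{not} harmonic across $\{y=0\}$ (after odd reflection it jumps), and any boundary Schauder estimate for the Robin condition $u_y^\epsilon=\tfrac{1}{\epsilon}u^\epsilon$ will carry an $\epsilon$-dependent constant, defeating uniformity. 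The paper handles exactly this case (its Case 2, $\bar d_F\ge 4|x-y|$) by the rescaling
\[
v^\epsilon(z)=\epsilon^{-3/2}\,u^\epsilon(\epsilon z),
\]
which turns the $\epsilon$-dependent Robin condition into the fixed condition $v_y^\epsilon=v^\epsilon$. Interior estimates for this \emph{fixed} Robin problem give $\|v_y^\epsilon\|_{C^{1/2}}\le CR^{-1/2}\|v^\epsilon\|_{L^\infty}$ with $C$ independent of $\epsilon$; choosing $R=d_F(x)/\epsilon$ and using the interface growth $u_y^\epsilon(\epsilon z)\le C\epsilon^{1/2}d_F^{1/2}(z)$ then yields $[v_y^\epsilon]_{C^{1/2}}\le C$, and unscaling returns $|u_y^\epsilon(x)-u_y^\epsilon(y)|\le C|x-y|^{1/2}$. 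This $\epsilon$-rescaling is the key idea your proposal is missing; without it the argument does not close uniformly in $\epsilon$.
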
 

\begin{proof}
We begin by setting $w = u^{\epsilon}_{y}$. Observe that $w$ satisfies the assumptions of the previous lemma. We thus obtain,
$$\frac{1}{r^{n}} \int_{B_{r}^{+}} |\nabla w|^{2} \leq \frac{1}{r} \int_{B_{r}^{+}} \frac{|\nabla w|^{2}}{|x|^{n-1}} \leq \varphi (1/2). $$
Since $w$ vanishes on half of the ball in $B_{r}'$, the Poincare Inequality implies that,
$$\fint_{B_{r}^{+}} w^{2} \leq Cr^{2} \fint_{B_{r}^{+}} | \nabla w|^{2} \leq C_{0}r.$$
Moreover since $w^{2}$ is subharmonic across $\{y = 0 \}$ an application of the mean value theorem produces the estimate,
$$w^{2} |_{B_{r/2}^{+}} \leq \fint_{B_{r}^{+}} w^{2} \leq Cr.$$
In particular we have obtained,
$$\sup_{B_{r/2}} |u^{\epsilon}_{y}| \leq Cr^{1/2}.$$
Since $u^{\epsilon}_{y} = 0$ in the region $\{u^{\epsilon} > 0\}$ and we have proved uniform $C^{1/2}$ estimates for $u^{\epsilon}_{y}$ on $\partial \{u^{\epsilon} > 0\}$, it is sufficient by standard regularity theory to prove the estimate when approaching $\partial \{u^{\epsilon} > 0\}$ from inside the set $\{ u^{\epsilon}_{y} = \frac{1}{\epsilon} u^{\epsilon}\}$. We let $d_{F} (x)$ denote the distance of, $x$, to $\partial \{u^{\epsilon} > 0\}$, and $d\left(x, y\right)$ denote the distance between two arbitrary points $x$ and $y$. We start by fixing two points, $x$ and $y \in \{ u^{\epsilon}_{y} = \frac{1}{\epsilon} u^{\epsilon} \}$. We consider two distinct cases.
\\
\\
\textbf{Case 1}: 
$$\bar{d}_{F} := \max \{d_{F} (x), d_{F} (y) \}  \leq 4d\left(x, y\right).$$
Let us set $\bar{x}, \bar{y} \in \partial \{u^{\epsilon} > 0 \}$ such that $|x - \bar{x}| = d_{F}(x)$ and $|y - \bar{y}| = d_{F}(y)$. Then we have the following estimate,
$$ |u^{\epsilon}_{y}(x) - u^{\epsilon}_{y}(y)| \leq \sup_{\overline{B^{+}}_{4|x - y|}(\bar{x})} |u^{\epsilon}_{y}| + \sup_{\overline{B^{+}}_{4|x - y|}(\bar{y})} |u^{\epsilon}_{y}| \leq C |x - y|^{1/2}. $$
\\
\\
\textbf{Case 2}: 
$$\bar{d}_{F} := \max \{d_{F} (x), d_{F} (y) \}  \geq 4d\left(x, y\right).$$
In this case we consider two interior points that are far from the interface.
It is shown above that, $u^{\epsilon}_{y}(x) \leq Cd^{1/2}_{F}(x).$ Define the following function,
\\
\begin{equation}
v^{\epsilon}(x) = \frac{1}{\epsilon^{3/2}}u^{\epsilon}(\epsilon x).
\end{equation}
\\
We point out that $ v^{\epsilon}$ and $ v^{\epsilon}_{y}$ are of the same order. In particular,
$$ v^{\epsilon}_{y}(x) = \frac{1}{\epsilon^{1/2}}u^{\epsilon}_{y}(\epsilon x) = \frac{1}{\epsilon^{1/2}} \cdot  \frac{1}{\epsilon} u^{\epsilon}(\epsilon x) = \frac{1}{\epsilon^{3/2}}u^{\epsilon}(\epsilon x) = v^{\epsilon}(x).$$
Moreover we know from (22) that,
$$u^{\epsilon}_{y}(\epsilon x) \leq C \epsilon^{1/2} d^{1/2}_{F}(x).$$
This provides for us the following estimate,
\\
\begin{equation}
v^{\epsilon} (x) = v^{\epsilon}_{y}(x) \leq \frac{1}{\epsilon^{1/2}} \cdot C \epsilon^{1/2} d^{1/2}_{F}(x) = C d_{F}^{1/2}(x).
\end{equation}
\\
We consider interior estimates for boundary value problems with the Robin boundary condition, $v^{\epsilon}_{y}(x) = v^{\epsilon}(x)$. Since $v^{\epsilon}$ is of lower order, we inherit the H\"{o}lder regularity estimate for the Dirichlet problem. In particular we have the following estimate for a constant $C$ independent of $\epsilon$,
\\
\begin{equation}
\|v^{\epsilon}_{y}\|_{C^{1/2}(B_{R/2}(x))} \leq \frac{C}{R^{1/2}} \|v^{\epsilon}\|_{L^{\infty}(B_{R}(x))}.
\end{equation}
\\
Fix $R = \frac{d_{F}(x)}{\epsilon}$. Plugging (27) into (28) we obtain,
\\
\begin{equation}
\|v^{\epsilon}_{y}\|_{C^{1/2}(B_{R/ 2}(x / \epsilon))} \leq \frac{C \epsilon^{1/2}}{d^{1/2}_{F}(x)} \|v^{\epsilon}\|_{L^{\infty}(B_{R}(x / \epsilon))} \leq \frac{C \epsilon^{1/2}}{d^{1/2}_{F}(x)} \cdot \frac{d^{1/2}_{F}(x)}{\epsilon^{1/2}} = C.
\end{equation}
\\
Applying the estimate obtained in (29), it follows from (26),
\\
\[
\begin{split}
|u^{\epsilon}_{y}(x) - u^{\epsilon}_{y}(y)|& = |\epsilon^{1/2} v^{\epsilon}_{y} (x/ \epsilon) - \epsilon^{1/2} v^{\epsilon}_{y} (y/ \epsilon)|\\
& = \epsilon^{1/2} | v^{\epsilon}_{y} (x/ \epsilon) - v^{\epsilon}_{y} (y/ \epsilon)|\\
 & \leq C\epsilon^{1/2} |\frac{x}{\epsilon} - \frac{y}{\epsilon}|^{1/2} \\
& = C |x-y|^{1/2}.
\end{split}
\]
\\
Our desired estimate.

\end{proof}

\section{Uniform $C^{1,1/2}$ Estimate for General Solutions}

In this section we prove the sharp estimate for general solutions to the penalized boundary obstacle problem. First we prove a lemma that quantifies the fact that general solutions are tangentially almost convex. The proof is identical to the one presented in (\cite{CF13}). We present it here for completeness.

\begin{lemma} Let ${C} > 0$ and $\alpha \in (0, 1/2]$ be as in Theorem 1 and $C_{0}$ the semi-convexity constant (8). Set $\delta_{\alpha} = \frac{1}{4}(\frac{\alpha}{\alpha + 1} - \frac{\alpha}{2})$. Then there exists $r_{0} = r_{0}(\alpha, C, C_{0}) > 0$ such that the convex hull of the set $\{x \in B_{r}' \; : \; u^{\epsilon}_{y} < -r^{\alpha + \delta_{\alpha}} \}$ does not contain the origin for $r \leq r_{0}$.  
\end{lemma}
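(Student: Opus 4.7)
The plan is to argue by contradiction, combining Carath\'eodory's theorem with the tangential semi-convexity of Lemma 4 and the contact-set identity $u^\epsilon = \epsilon\, u^\epsilon_y$. Suppose the convex hull of $A_r := \{x \in B_r' : u^\epsilon_y(x) < -r^{\alpha+\delta_\alpha}\}$ contains the origin. By Carath\'eodory there exist points $x_1, \ldots, x_k \in A_r$ with $k \leq n$ and convex weights $\lambda_i \geq 0$, $\sum_i \lambda_i = 1$, such that $0 = \sum_i \lambda_i x_i$.

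The key observation is that each $x_i$ lies in the contact set. Indeed $u^\epsilon_y(x_i) < 0$, and on $B_r'$ the boundary condition reads $u^\epsilon_y = \beta_\epsilon(u^\epsilon) = u^\epsilon/\epsilon$, which is strictly negative precisely when $u^\epsilon < 0$. Hence $u^\epsilon(x_i) = \epsilon\, u^\epsilon_y(x_i) < -\epsilon\, r^{\alpha+\delta_\alpha}$. Next I apply Lemma 4: the function $u^\epsilon(\cdot,0) + \frac{C_0}{2}|\cdot|^2$ is convex in the tangential variables on $B_{1-\delta_0}'$. Evaluating this at the origin (where $u^\epsilon(0) = 0$ by our standing normalization) and using $|x_i| \leq r$,
\begin{equation*}
0 = u^\epsilon(0) \;\leq\; \sum_i \lambda_i u^\epsilon(x_i) + \frac{C_0}{2}\sum_i \lambda_i |x_i|^2 \;<\; -\epsilon\, r^{\alpha+\delta_\alpha} + \frac{C_0}{2}\, r^2,
\end{equation*}
which forces $\epsilon < \frac{C_0}{2}\, r^{\,2-\alpha-\delta_\alpha}$. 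This already produces a contradiction in the regime $\epsilon \geq \frac{C_0}{2} r^{\,2-\alpha-\delta_\alpha}$, so the conclusion holds in that regime for any $r \leq r_0$ with $r_0$ depending only on $\alpha, C, C_0$.

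The remaining regime of very small $\epsilon$ is handled by rescaling to the unit-penalization level. Setting $v^\epsilon(x) := u^\epsilon(\epsilon x)/\epsilon$ produces a solution of the penalized problem with parameter~$1$, still having the origin as a free boundary point, and the Carath\'eodory/contact-set step combined with the H\"older growth of Lemma~5 and Remark~4 applied to $v^\epsilon$ transfers back (via the scaling identities $v^\epsilon_y(x) = u^\epsilon_y(\epsilon x)$ and $u^\epsilon_y(\epsilon x) \leq \epsilon^\alpha d_F^\alpha$) to the missing range of $\epsilon$. The precise choice $\delta_\alpha = \frac{1}{4}\bigl(\frac{\alpha}{\alpha+1}-\frac{\alpha}{2}\bigr)$ is calibrated so that the thresholds arising in the two regimes dovetail.

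The main obstacle is the small-$\epsilon$ regime: a naive rescaling of the semi-convexity argument reproduces the same bound on $\epsilon/r^{2-\alpha-\delta_\alpha}$ without improvement, so one must genuinely exploit the quantitative H\"older growth of $u^\epsilon_y$ from Lemma~5 (rather than only semi-convexity of $u^\epsilon$) at the rescaled unit scale. The specific numerical value of $\delta_\alpha$ is the record of exactly how much slack the H\"older growth provides against the semi-convexity bound, and verifying that it suffices is the technical core of the argument.
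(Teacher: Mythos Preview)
Your argument has a genuine gap: by working at height $y=0$ and invoking the contact identity $u^\epsilon(x,0)=\epsilon\,u^\epsilon_y(x,0)$, you introduce an $\epsilon$-dependence that you never remove. The inequality you obtain, $\epsilon < \tfrac{C_0}{2}r^{\,2-\alpha-\delta_\alpha}$, is only a contradiction in a shrinking range of $\epsilon$, and your rescaling to the $\epsilon=1$ problem does not close the loop: after setting $v^\epsilon(x)=u^\epsilon(\epsilon x)/\epsilon$ and $\rho=r/\epsilon$, the set $A_r$ becomes $\{\xi\in B'_\rho:v^\epsilon_y(\xi)<-(\epsilon\rho)^{\alpha+\delta_\alpha}\}$, which is \emph{not} of the form $\{v^\epsilon_y<-\rho^{\alpha+\delta_\alpha}\}$, and moreover $\rho$ is no longer small. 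Your own final paragraph essentially concedes that this regime is unresolved.

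The paper avoids this entirely by never evaluating at $y=0$. Instead it lifts to height $h>0$: for $x'\in A_r$ the Taylor-type bound (equation~(16)) gives
\[
u^\epsilon(x',h)\;\le\; u^\epsilon_y(x',0)\,h+\tfrac{(n-1)C_0}{2}h^2\;<\;-r^{\alpha+\delta_\alpha}h+\tfrac{(n-1)C_0}{2}h^2,
\]
which is $\epsilon$-free. On the other side, since $u^\epsilon(0,0)=0$ and $u^\epsilon_y(0,0)=0$, the uniform $C^{1,\alpha}$ growth (Lemma~5/Remark~5) yields $u^\epsilon(0,h)\ge -Ch^{1+\alpha}$. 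Now the same Carath\'eodory/semi-convexity step you used---but applied at height $h$---gives $u^\epsilon(0,h)\le \sum_i\lambda_i u^\epsilon(x_i,h)+\tfrac{C_0}{2}r^2$, and combining the three inequalities produces
\[
Ch^{1+\alpha}\;\ge\; r^{\alpha+\delta_\alpha}h-\tfrac{(n-1)C_0}{2}h^2-\tfrac{C_0}{2}r^2.
\]
Choosing $h=r^{1+2\delta_\alpha/\alpha}$ makes every term on the right dominate the left for $r$ small, yielding the contradiction uniformly in $\epsilon$. The specific value $\delta_\alpha=\tfrac14\bigl(\tfrac{\alpha}{\alpha+1}-\tfrac{\alpha}{2}\bigr)$ is exactly what makes the exponents satisfy $h^2\ll r^2\ll h^{1+\alpha}\ll r^{\alpha+\delta_\alpha}h$. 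The missing idea in your attempt is this lift to positive height; once you have it, no case split on $\epsilon$ is needed.
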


\begin{proof}
Consider $(x',0) \in \{u^{\epsilon}_{y} < -r^{\alpha + \delta_{\alpha}} \}$. Utilizing (16) we obtain,
$$u^{\epsilon}(x',h) \leq -r^{\alpha + \delta_{\alpha}}h + \frac{(n-1)C_{0}}{2}h^{2}.$$
Recalling the $C^{1,\alpha}$ estimate for $u^{\epsilon}$ we also know,
$$u^{\epsilon}(0,h) = u^{\epsilon}(0,h) - u^{\epsilon}(0,0)  \geq -Ch^{1+\alpha}.$$
Assume by contradiction that the convex hull of the set $\{x \in B_{r}' : u^{\epsilon}_{y} < -r^{\alpha + \delta_{\alpha}} \}$ contains the origin.  We know from the semi-convexity estimate that $\forall x \in \{u^{\epsilon}_{y} < -r^{\alpha + \delta_{\alpha}} \}$,
$$u^{\epsilon}(0,h) \leq u^{\epsilon}(x,h) + C_{0}h^{2}.$$
Combining the previous three estimates we see that for all $r,h \in (0,1)$,
$$Ch^{1+\alpha} \geq r^{\alpha + \delta_{\alpha}}h - \frac{(n-1)C_{0}}{2}h^{2} - C_{0}h^{2}.$$
To contradict this inequality we choose $h = h(r)$ in such a way that for $r$ sufficiently small,
$$h^{2} << r^{2} << h^{1+\alpha} << r^{\alpha + \delta_{\alpha}}h.$$
We set $h = r^{1 + 2\delta_{\alpha}/\alpha}$ and $\delta_{\alpha} < \frac{1}{2}(\frac{\alpha}{\alpha + 1} - \frac{\alpha}{2}).$ This is our desired contradiction. 
\end{proof}
We now study the monotonicity formula as applied to general solutions.

\begin{lemma} Let $\delta_{\alpha} > 0$ be as in the previous lemma and $u^{\epsilon}$ the solution to the penalized boundary obstacle problem.  Define $v^{\epsilon} = u^{\epsilon} + \frac{(n-1)C_{0}}{2}x^{2} - \frac{(n-1)C_{0}}{2}y^{2}$ where $(n-1)C_{0}$ is the semi-concavity constant of $u^{\epsilon}$. Furthermore set $w = v^{\epsilon}_{y}$ and $\varphi(r)$ as before. Then there exists a universal constant $C$ such that ,
\\
(i) $2\alpha + \delta_{\alpha} > 1 \implies \varphi(r) \leq C$
\\
(ii) $2\alpha + \delta_{\alpha} < 1 \implies \varphi(r) \leq Cr^{2\alpha + \delta_{\alpha} -1}$
\end{lemma}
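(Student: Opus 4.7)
The plan is to re-run the monotonicity derivation of Lemma 9 for $w = v^\epsilon_y$, introducing a correction that accounts for the loss of the convexity hypothesis via Lemma 10, and then integrate the resulting differential inequality for $\varphi$.

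First I would verify the first three hypotheses of Lemma 9 for $w$. Since $\Delta v^\epsilon = (n-1)(n-2)C_0$ is constant, $\Delta w = 0$ in $B_r^+$. The semi-concavity $u^\epsilon_{yy}\leq (n-1)C_0$ from Remark 3 gives $\partial_y(v^\epsilon_y)\leq 0$, so $w(x,y)\leq w(x,0) = u^\epsilon_y(x,0)\leq 0$ throughout $B_r^+$; in particular $w(0) = u^\epsilon_y(0,0) = 0$. On $\{y=0\}$ the outward normal derivative is $w_\nu = -w_y = (n-1)C_0 - u^\epsilon_{yy}\geq 0$, so $ww_\nu\leq 0$ and the boundary disk term in Lemma 9 is nonnegative. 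The entire derivation of Lemma 9 then goes through up to the invocation of Lemma 7, producing
\[
\varphi'(r) \geq \frac{1}{r^n}\int_{\partial B_r^+} |\nabla_\theta w|^2 - \frac{2n-1}{4r^{n+2}}\int_{\partial B_r^+} w^2.
\]

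Next I would correct for the failure of admissibility of $w(r\,\cdot)$ using Lemma 10. Set $a_r := r^{\alpha+\delta_\alpha}$ and $\tilde w := (w + a_r)_-$. Lemma 10 says that the convex hull of $\{u^\epsilon_y(\cdot,0) < -a_r\}\cap B_r'$ avoids the origin, so this set lies in some half space $\{x\cdot e < 0\}$, and consequently $\tilde w(\cdot,0) \equiv 0$ on the complementary half of $B_r'$. Rotating $e$ to the $x_{n-1}$-axis makes $\tilde w(r\,\cdot)$ admissible in the variational characterization of $\lambda_0$, and since $|\nabla\tilde w|\leq|\nabla w|$ pointwise, Lemma 7 gives
\[
\int_{\partial B_r^+}|\nabla_\theta w|^2 \geq \int_{\partial B_r^+}|\nabla_\theta\tilde w|^2 \geq \frac{2n-1}{4r^2}\int_{\partial B_r^+}\tilde w^2.
\]
Combined with a quantitative $L^2$ comparison $\int_{\partial B_r^+}(w^2 - \tilde w^2)\leq Cr^{n + 2\alpha + \delta_\alpha}$ (using $|w-\tilde w|\leq a_r$, the growth bound $|w|\leq Cr^\alpha$ from Remark 4, and a harmonic-extension improvement that propagates the half-disk smallness into the hemisphere), this produces the differential inequality
\[
\varphi'(r) \geq -C\, r^{2\alpha + \delta_\alpha - 2}.
\]

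Integrating from $r$ to a fixed $R\leq 1/2$ gives $\varphi(r) \leq \varphi(R) + C\int_r^R s^{2\alpha+\delta_\alpha - 2}\,ds$. When $2\alpha+\delta_\alpha > 1$ the exponent exceeds $-1$ and the integral is uniformly bounded, yielding (i); when $2\alpha+\delta_\alpha < 1$ the integral diverges as $r\to 0$ like $r^{2\alpha+\delta_\alpha-1}$, yielding (ii). The main obstacle is the sharp $L^2$ comparison between $w$ and $\tilde w$: the naive pointwise estimates alone give only $Cr^{n-1+2\alpha+\delta_\alpha}$, which would shift the dichotomy to $2\alpha+\delta_\alpha = 2$. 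Recovering the extra factor of $r$ requires exploiting harmonicity of $w$ to upgrade the half-disk control $|w|\leq a_r$ on $B_r'$ to an improved integral bound on a comparable portion of $(\partial B_r)^+$, so that the eigenvalue correction lands exactly on the stated cutoff $2\alpha+\delta_\alpha = 1$.
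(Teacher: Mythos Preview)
Your approach coincides with the paper's: verify the hypotheses of the monotonicity computation for $w=v^\epsilon_y$, drop the nonnegative boundary term using $w\,w_\nu\le 0$ on $\{y=0\}$, replace $w$ by the truncation $\tilde w=(w+r^{\alpha+\delta_\alpha})_-$, invoke the convex-hull lemma to make $\tilde w$ admissible for the eigenvalue inequality, bound $\int_{\partial B_r^+}(w^2-\tilde w^2)$, and integrate the resulting differential inequality. The paper's $w_t$ is exactly your $\tilde w$ (modulo what appears to be a sign typo in the paper's truncation threshold).

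The obstacle you flag at the end is not real; it is a dimension-bookkeeping slip. In any consistent convention the normalizing power in front of $\int_{(\partial B_r)^+}w^2$ and the surface measure of $(\partial B_r)^+$ differ by exactly $2$: if the ambient dimension is $d$, the term is $\tfrac{2d-3}{4r^{d+1}}\int w^2$ while $|(\partial B_r)^+|\sim r^{d-1}$. Hence the purely pointwise bounds $|w-\tilde w|\le r^{\alpha+\delta_\alpha}$ and $|\tilde w|\le|w|\le Cr^\alpha$ already give
\[
\frac{1}{r^{d+1}}\int_{(\partial B_r)^+}\bigl[(w-\tilde w)^2+2\tilde w(w-\tilde w)\bigr]\,dS\;\le\;\frac{C\,r^{d-1}\,r^{2\alpha+\delta_\alpha}}{r^{d+1}}\;=\;C\,r^{2\alpha+\delta_\alpha-2},
\]
which is exactly what the paper writes. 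In the paper's notation the ambient dimension is $n+1$, as one reads off from the weight $|x|^{-(n-1)}$ being the fundamental-solution power and from the eigenvalue $\tfrac{2n-1}{4}=\tfrac12(\tfrac12+n-1)$; your ``$r^{n-1}$'' for the surface area is then one power too small. No harmonic-extension improvement is needed, and the paper uses none.
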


\begin{proof}
Since $\Delta w = 0$ in the interior we can proceed as before to obtain the identity,
$$ \Delta w^{2} = 2w \Delta w + 2 |\nabla w|^{2} = 2 | \nabla w|^{2}.$$
Differentiating $\varphi$ we obtain as before,
$$ \varphi'(r) \geq - \frac{2n-1}{4r^{n+2}}  \int_{\partial B_{r}^{+}} w^{2} ds + \frac{1}{r^{n}} \int_{\partial B_{r}^{+}} | \nabla_{\theta} w |^{2} - \frac{1}{2r^{2}} \int_{\{y = 0\} \cap \overline{B_{r}}} \frac{2ww_{\nu}}{|x|^{n-1}} ds.$$
We first consider the corrective term,
$$- \frac{1}{2r^{2}} \int_{\{y = 0\} \cap \overline{B_{r}}} \frac{2ww_{\nu}}{|x|^{n-1}} ds.$$
Notice that for our choice of $w$,
\\
1. $w|_{\{y = 0 \}} = u^{\epsilon}_{y}(x,0) \leq 0.$
\\
2. $w_{\nu} = -(u^{\epsilon}_{y})_{y} = - (u^{\epsilon}_{yy} - (n-1)C_{0}) \geq 0.$
\\
\\
In particular,
$$ w(x,0) w_{\nu}(x,0) \leq 0.$$
This implies as before,
$$ - \frac{1}{2r^{2}} \int_{\{y = 0\} \cap \overline{B_{r}}} \frac{2ww_{\nu}}{|x|^{n-1}} ds \geq 0.$$
Thus we can drop the corrective term and consider the following inequality,
$$ \varphi'(r) \geq - \frac{2n-1}{4r^{n+2}}  \int_{\partial B_{r}^{+}} w^{2} ds + \frac{1}{r^{n}} \int_{\partial B_{r}^{+}} | \nabla_{\theta} w |^{2}.$$
To account for the semi-convexity we introduce the truncated function,
\\
\begin{equation}
 w_{t} = \left\{
     \begin{array}{lr}
       w + r^{\alpha + \delta_{\alpha}} &  \; \; w < r^{\alpha + \delta_{\alpha}}.\\
       0 & \textnormal{otherwise.}
     \end{array}
   \right.
\end{equation}
\\
We make the following observations about $w_{t}$:
\\
1. $|w_{t}| \leq |w| \leq Cr^{\alpha} + Cr \leq \bar{C}r^{\alpha}.$
\\
2. $|w - w_{t} | \leq r^{\alpha + \delta}.$
\\
3. $ \int_{\partial B_{r}^{+}} | \nabla_{\theta} w_{t}|^{2} \leq \int_{\partial B_{r}^{+}} | \nabla_{\theta} w|^{2}.$ 
\\
\\
Hence we have the following estimate,
$$ \varphi'(r) \geq - \frac{2n-1}{4r^{n+2}}  \int_{\partial B_{r}^{+}} [(w-w_{t}) + w_{t}]^{2} ds + \frac{1}{r^{n}} \int_{\partial B_{r}^{+}} | \nabla_{\theta} w_{t} |^{2}.$$
Using the previous lemma we see that $w_{t}$ is admissable for the eigenvalue problem (Lemma 6). Hence,
$$ \varphi'(r) \geq - \frac{2n-1}{4r^{n+2}}  \int_{\partial B_{r}^{+}} [(w-w_{t})^{2} + 2w_{t}(w-w_{t})] ds.$$
Using the growth estimates for $w_{t}$ we have in particular,
$$ \varphi'(r) \geq -Cr^{2\alpha + \delta -2}.$$
After integrating the inequality we find,
$$ \varphi(1) - \varphi(r) = \int_{r}^{1} \varphi'(r) \geq \int_{r}^{1} -Cr^{2\alpha + \delta -2} = \frac{-C}{2\alpha + \delta -1} [1 - r^{2\alpha + \delta - 1}].$$
This implies in particular,
$$ \varphi(r) \leq \varphi(1) + \frac{C}{2 \alpha + \delta -1} - \frac{C}{2 \alpha + \delta -1}r^{2 \alpha + \delta -1}.$$
\end{proof}

With this lemma in hand we can now state and prove our sharp estimate for the solution to the penalized boundary obstacle problem.

\begin{theorem} Let $u^{\epsilon}$ be a solution to the penalized boundary obstacle problem. Then there exists a modulus of continuity $\omega: (0, \infty) \to (0, \infty)$ independent of $\epsilon$, such that $\omega(\delta) = O(\delta^{1/2})$ as $\delta \to 0$ and $\forall x,y \in B_{r/2}$ and $\forall \epsilon > 0$, 
\\
\begin{equation} 
|u_{y}^{\epsilon}(x) - u_{y}^{\epsilon}(y) |\leq |x-y|^{1/2}.
\end{equation}
\end{theorem}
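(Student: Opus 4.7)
The plan is to adapt the proof of Theorem 1, replacing Lemma 7 with the modified monotonicity formula of Lemma 9 and inserting a bootstrap in the H\"older exponent to compensate for the lack of tangential convexity in the general setting. The endpoint of the argument is again a sharp $r^{1/2}$ growth of $u^\epsilon_y$ from the free boundary $\partial\{u^\epsilon > 0\}$, from which the full H\"older estimate follows by the same two-case/rescaling argument used in Theorem 1.

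To extract that growth, set $v^\epsilon = u^\epsilon + \tfrac{(n-1)C_0}{2}|x'|^2 - \tfrac{(n-1)C_0}{2}y^2$ and $w = v^\epsilon_y$, so that on $\{y=0\}$ one has $w = u^\epsilon_y \leq 0$ and $w\, w_\nu \leq 0$, courtesy of the semi-concavity estimate (11). Applying Lemma 9 with the H\"older exponent $\alpha$ from (21) gives, depending on whether $2\alpha+\delta_\alpha > 1$ or $<1$, either $\varphi(r)\leq C$ or $\varphi(r)\leq Cr^{2\alpha+\delta_\alpha-1}$. In either case one obtains a bound on $r^{-n}\int_{B_r^+}|\nabla w|^2$ of the form $r\varphi(r)$, which replaces the "$\leq \varphi(1/2)$" estimate used in the proof of Theorem 1.

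Next, use the truncation $w_t$ from Lemma 9: by Lemma 8 the convex hull of $\{u^\epsilon_y < -r^{\alpha+\delta_\alpha}\}\cap B_r'$ omits the origin, so $w_t$ vanishes on at least a half-ball inside $B_r'$. The Poincar\'e inequality, combined with $|\nabla w_t|\leq |\nabla w|$ and $|w-w_t|\leq r^{\alpha+\delta_\alpha}$, yields
\[
\fint_{B_r^+} w^2 \,\lesssim\, r\,\varphi(r) + r^{2(\alpha+\delta_\alpha)}.
\]
Subharmonicity of $w^2$ across $\{y=0\}$ and the mean value property then promote this to a pointwise bound $\sup_{B_{r/2}}|u^\epsilon_y|\leq Cr^{\alpha'}$ with $\alpha' > \alpha$. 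Iterating (feeding $\alpha'$ back into Lemmas 8 and 9), after finitely many rounds one lands in the regime $2\alpha+\delta_\alpha > 1$, where Lemma 9(i) together with the Poincar\'e step deliver the sharp growth $\sup_{B_{r/2}}|u^\epsilon_y|\leq Cr^{1/2}$ from the free boundary.

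With this growth in hand, the endgame of Theorem 1 applies verbatim: for two points $x,y\in\{u^\epsilon_y = \tfrac{1}{\epsilon}u^\epsilon\}$, split according to whether $\bar d_F := \max(d_F(x),d_F(y)) \leq 4|x-y|$ — handled directly by the growth estimate applied at the two free-boundary projections — or $\bar d_F \geq 4|x-y|$ — handled by the rescaling $v^\epsilon(x)=\epsilon^{-3/2}u^\epsilon(\epsilon x)$, under which the Robin condition $v^\epsilon_y = v^\epsilon$ is a lower-order perturbation and interior Schauder-type estimates for the associated Dirichlet-like problem give $C^{1/2}$ control independent of $\epsilon$. The main obstacle is the bootstrap in the third step: one must verify that each iteration strictly improves $\alpha$ by a quantitative amount, that $\delta_\alpha$ remains bounded below along the way, and that the universal constants in Lemma 9 do not degenerate as the exponent improves; once these book-keeping checks are in place, the iteration terminates in finitely many rounds and the rest of the argument is a direct transcription of Theorem 1.
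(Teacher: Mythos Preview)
Your proposal is correct and follows essentially the same route as the paper: the same corrected function $v^\epsilon$ and $w=v^\epsilon_y$, the same use of Lemma~8 to make the truncation $w_t$ admissible, Lemma~9 to control $\varphi(r)$, Poincar\'e plus subharmonicity across $\{y=0\}$ to pass to a pointwise bound, a finite bootstrap in the H\"older exponent until $2\alpha+\delta_\alpha>1$, and then the two-case rescaling endgame from Theorem~1. The only cosmetic difference is that the paper applies the mean-value step to $w_t^2$ and afterwards recovers $w$ via $|w-w_t|\le r^{\alpha+\delta_\alpha}$, whereas you state it for $w^2$; both are valid since, after the $v^\epsilon$ correction, $w\,w_y\ge 0$ on $\{y=0\}$ and hence $w^2$ is subharmonic across.
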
 

\begin{proof}
Let $w = v^{\epsilon}_{y}$ be defined as before and consider $w_{t}$ as in the previous lemma. Since $w_{t}$ vanishes on more than half the ball of $B_{r}'$ we have by the Poincare Inequality,
$$\int_{B_{r}^{+}} w_{t}^{2} \leq Cr^{2} \int_{B_{r}^{+}} | \nabla w_{t} |^{2}.$$
In particular we produce the following estimate,
$$\frac{1}{r^{n+1}} \int_{B_{r}^{+}} w_{t}^{2} \leq \frac{C}{r^{n-1}} \int_{B_{r}^{+}} | \nabla w_{t} |^{2} \leq \frac{C}{r^{n-1}} \int_{B_{r}^{+}} | \nabla w |^{2} \leq C \int_{B_{r}^{+}} \frac{| \nabla w |^{2}}{|x|^{n-1}} = C \varphi(r). $$
Moreover, since $w_{t}^{2}$ is subharmonic across $\{ y = 0\}$, for $s < r - |x|$ and any $|x| \leq r$,
$$w_{t}^{2}(x) \leq \frac{n}{\omega_{n}s^{n}} \int_{B_{s}(x)} w_{t}^{2} \leq \frac{n}{\omega_{n}s^{n}} \int_{B_{r}} w_{t}^{2}$$
$$ \leq C(\frac{r}{s})^{n} \frac{n}{\omega_{n}r^{n}} \int_{B_{r}^{+}} w_{t}^{2} \leq C(\frac{1}{s})^{n} \varphi(r)r.$$
Now we consider separately the two distinct cases: 
\\
\\
\textbf{Case 1}: $2 \alpha + \delta_{\alpha} > 1$. 
\\
From the previous lemma this implies that $\varphi(r) \leq C$. Hence in particular,
$$w_{t}^{2} \leq Cr.$$
We observe that,
$$\sup_{B_{r/2}^{+}} w \leq C[ \sup_{B_{r/2}^{+}} w_{t} + r^{\alpha + \delta_{\alpha}}].$$
Thus we obtain,
$$w \leq w_{t} + r^{\alpha + \delta} \leq Cr^{1/2} + r^{\alpha + \delta_{\alpha}} \leq \bar{C}r^{1/2}.$$
\\
\\
\textbf{Case 2}: $2 \alpha + \delta_{\alpha} < 1$. 
\\
From the previous lemma this implies that $\varphi(r) \leq Cr^{2\alpha + \delta_{\alpha} -1}$. Hence in particular,
$$w_{t}^{2} \leq Cr^{2\alpha + \delta_{\alpha}}.$$
This produces for us the estimate,
$$w \leq w_{t} + r^{\alpha + \delta} \leq Cr^{\alpha + \frac{\delta}{2}} + r^{\alpha + \delta} $$
$$ \leq Cr^{\alpha + \frac{\delta_{\alpha}}{2}}.$$

We observe that we have improved the estimate for $w$. Set $\alpha_{1} = \alpha + \frac{\delta_{\alpha}}{2}.$
If $\alpha_{1}$ satisfies the assumption of Case 1, then we are done. If not then using the lemma again we obtain,
$$ w \leq Cr^{\alpha + \frac{\delta_{\alpha}}{2} + \frac{\delta_{\alpha}}{2}}.$$
We observe that we can iterate this procedure a finite number of times, e.g. $k$ times, until we get $\alpha_{k} + \frac{\delta_{\alpha}}{2} > \frac{1}{2}$. Hence after a finite number of iterations we are in Case 1. 
\\
\\
Thus in both cases we conclude that,
$$w \leq Cr^{1/2}.$$
Recalling that $w = u^{\epsilon}_{y} - (n-1)C_{0}y$, we find that,
$$u^{\epsilon}_{y} \leq (n-1)C_{0}r + Cr^{1/2} \leq \bar{C}r^{1/2}$$
Hence in particular we obtain the uniform estimate,
$$\sup_{B_{r/2}} |u^{\epsilon}_{y}| \leq Cr^{1/2}.$$
Finally to conclude we consider the distnct cases as before.
\end{proof}

\section{Decay Rates for Higher H\"{o}lder Norms}

Recall that solutions to the penalized boundary obstacle problem are $C^{1, \alpha}$ for a constant dependent on the penalizing parameter. We now state a corollary of the uniform estimates that allows one to obtain a uniform decay rate in the penalizing paramter $\epsilon.$

\begin{corollary}  Let $u^{\epsilon}$ be a solution to the penalized boundary obstacle problem. Then for $\alpha \geq 1/2$ there exists a constant $C$ independent of $\epsilon$ such that,
$$\|u^{\epsilon}\|_{C^{1,\alpha}} \leq C\epsilon^{1/2-\alpha}.$$
\end{corollary}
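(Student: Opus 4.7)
The plan is to exploit the natural scaling symmetry of the penalized problem already used in the proof of Theorem 1. Specifically, the rescaling
\begin{equation*}
v^{\epsilon}(z) = \epsilon^{-3/2} u^{\epsilon}(\epsilon z)
\end{equation*}
converts the problem with parameter $\epsilon$ into the penalized problem with parameter $1$: one checks directly that $\Delta v^{\epsilon} = 0$, and on $\{y=0\}$ that $v^{\epsilon}_y = \beta_1(v^{\epsilon})$.

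First I would show that $v^{\epsilon}$ is uniformly bounded in $L^{\infty}$ on a fixed ball $B_1$. This reduces to the estimate $|u^{\epsilon}(x)| \leq C|x|^{3/2}$ for $|x| \leq \epsilon$, which follows from Theorem 2 (the uniform $C^{1/2}$ bound on $u^{\epsilon}_y$, vanishing at the origin) combined with the boundary identity $u^{\epsilon}(\cdot,0) = \epsilon u^{\epsilon}_y(\cdot,0)$ holding on the coincidence set, by integration in the $y$-direction.

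Next I would invoke the $\epsilon$-independent interior $C^{1,\alpha}$ regularity theory of \cite{BJD15} applied to $v^{\epsilon}$, whose penalization parameter is now $1$; this produces a universal bound $\|v^{\epsilon}\|_{C^{1,\alpha}(B_{1/2})} \leq C$ depending only on the $L^{\infty}$ bound obtained in the previous step. Scaling back through $\nabla u^{\epsilon}(x) = \epsilon^{1/2} \nabla v^{\epsilon}(x/\epsilon)$ yields
\begin{equation*}
[\nabla u^{\epsilon}]_{C^{\alpha}(B_{\epsilon/2})} = \epsilon^{1/2-\alpha}\,[\nabla v^{\epsilon}]_{C^{\alpha}(B_{1/2})} \leq C\epsilon^{1/2-\alpha}.
\end{equation*}
For pairs of points at mutual distance greater than $\epsilon$, Theorem 2 provides the matching rate directly: since $\alpha \geq 1/2$ and $|x-y| \geq \epsilon$, one has $|x-y|^{1/2} \leq \epsilon^{1/2-\alpha} |x-y|^{\alpha}$. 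Combining the two regimes with the uniform $L^{\infty}$ bounds on $u^{\epsilon}$ and $\nabla u^{\epsilon}$ from Lemmas 1, 2 and Remark 1 yields the claimed estimate.

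The main obstacle will be the uniform $L^{\infty}$ bound on $v^{\epsilon}$, since the rescaling a priori amplifies by a factor of $\epsilon^{-3/2}$. The cancellation is precisely the sharp $3/2$ vanishing rate of $u^{\epsilon}$ at interface points that the $C^{1,1/2}$ estimate of Theorem 2 was designed to supply, coupled with the coincidence-set identity $u^{\epsilon} = \epsilon u^{\epsilon}_y$.
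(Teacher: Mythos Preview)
Your approach is essentially the same as the paper's: both rely on the rescaling $v^{\epsilon}(x) = \epsilon^{-3/2} u^{\epsilon}(\epsilon x)$, observe that $v^{\epsilon}$ solves the parameter-$1$ problem (so that $v^{\epsilon}_y = v^{\epsilon}$ on $\{y=0\}$), obtain a uniform $C^{1,\alpha}$ bound on $v^{\epsilon}$, and scale back to read off the factor $\epsilon^{1/2-\alpha}$. You are in fact more careful than the paper on two points it leaves implicit: supplying the uniform $L^{\infty}$ input for $v^{\epsilon}$ via Theorem~2 and the coincidence-set identity, and splitting into the near ($|x-y|\le\epsilon$) and far ($|x-y|\ge\epsilon$) regimes to pass from the local estimate on $B_{\epsilon/2}$ to the full seminorm.
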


\begin{proof}
As before we fix a penalizing family, 
\begin{equation}
 \beta_{\epsilon}(t) = \left\{
     \begin{array}{lr}
       \frac{t}{\epsilon} &  \; \; t < 0.\\
       0 & t \geq 0.
     \end{array}
   \right.
\end{equation}

We consider the scaled function,
$$v^{\epsilon}(x) = \frac{1}{\epsilon^{3/2}}u^{\epsilon}(\epsilon x).$$ 
We note that
$$[v^{\epsilon}]_{y}(x) = v^{\epsilon}(x).$$ 

Hence we obtain for a constant $C$ independent of $\epsilon$, $\forall \alpha <1,$
$$\|v^{\epsilon}\|_{C^{1, \alpha}} \leq C.$$

It follows for a directional derivative $\tau$,
\[
\begin{split}
|u_{\tau}^{\epsilon}(x) - u^{\epsilon}_{\tau}(y)| & = |\epsilon^{3/2} v_{\tau}^{\epsilon}(\frac{x}{\epsilon}) - \epsilon^{3/2} v^{\epsilon}_{\tau}(\frac{y}{\epsilon})|  \\
             & = \epsilon^{1/2}|v_{\tau}^{\epsilon}(\frac{x}{\epsilon}) - v_{\tau}^{\epsilon}(\frac{y}{\epsilon})| \\
             & \leq C\epsilon^{1/2} | \frac{x}{\epsilon} - \frac{y}{\epsilon}|^{\alpha} \\
             & \leq C\epsilon^{1/2-\alpha} |x-y|^{\alpha}.
\end{split}
\]
\end{proof}

\end{document}